\newtheorem{definition}{\bf Definition}[section]
\newtheorem{proposition}{\bf Proposition}[section]
\newtheorem{lemma}{\bf Lemma}[section]
\newtheorem{theorem}{\bf Theorem}[section]
\newtheorem{corollary}{\bf Corollary}[section]
\newtheorem{remark}{Remark}[section]
\newtheorem{example}{Example}[section]
\newcounter{H1}
\newenvironment{H1}{\refstepcounter{H1}\equation}{\tag{H}\endequation}
\newcounter{ACP1}
\newenvironment{ACP1}{\refstepcounter{ACP1}\equation}{\tag{ACP}\endequation}
\title{Ref\/ined scales of decaying rates of operator semigroups on Hilbert spaces: typical behaviour}
\author{M. Aloisio, S. L. Carvalho, and C. R. de Oliveira\thanks{Corresponding author. Telephone +55 16 3351 9153, fax +55 16 3361 2081.}}
\date{October 2019}
\begin{document}

\maketitle

\begin{abstract} We study relations between the decaying rates of operator semigroups on Hilbert spaces and some spectral properties of their respective generators; in particular, we show that the decaying rates of orbits of semigroups which are stable but not exponentially stable, typically in Baire's sense, depend on sequences of time going to infinity.
\end{abstract} 


\renewcommand{\thetable}{\Alph{table}}

\section{Introduction}


\noindent There is a vast literature concerning the theory of stability for solutions of the abstract Cauchy problem
\begin{ACP1}\label{cauchyproblem}
\begin{cases} \dot{x}(t) = A x(t), \quad t \geq 0, \\ x(0) = x, \quad x \in {\mathcal{H}},  \end{cases}
\end{ACP1}
where~$A$ is the generator of a $C_0$-semigroup~$(T(t))_{t\geq 0}$ on a Hilbert space ${\mathcal{H}}$; such theory has a very rich intersection with spectral theory, harmonic analysis, and numerous applications to partial differential equations. Actually, a great part of the recent interest in the study of stability of solutions of (\ref{cauchyproblem}) comes from damped wave equations and other models (see \cite{Batty,Borichev,Rozendaal,van} and references therein). 

Among the important results in this field we highlight the characterization of exponential stability for $C_0$-semigroups on Hilbert spaces (Gearhart-Pr\"{u}ss Theorem) due to Herbst, Howland and Pr\"{u}ss \cite{Herbst,Howland,Pruss}, the stability theorem, by Arendt, Batty, Lyubich and V\~u \cite{Arendt,Lyubich}, stating that a bounded $C_0$-semigroup on a Banach space is (strongly) stable if the peripheral spectrum of its generator is countable and contains no residual spectrum. We also highlight the recent results obtained by Borichev and Tomilov \cite{Borichev}, by Batty, Chill and Tomilov \cite{Batty} and very recently by Rozendaal, Seifert, and Stahn \cite{Rozendaal}, relating estimates on the norm of the resolvent of the generator to quantitative rates of convergence of the form
\begin{equation*}
\|T(t)A^{-1}\|_{\mathcal{B}(\mathcal{H})} = O(r(t)), \quad t \rightarrow \infty,
\end{equation*}
with $\displaystyle\lim_{t\to\infty}r(t)=0$, developed in order to explore polynomial and logarithmic scales, among others, of decaying rates of bounded $C_0$-semigroups. As it is well known, this strategy has allowed numerous applications of the theory to PDEs; namely, estimates on the norm of the resolvent of the generator are often easier to compute than the estimates on the norm of the semigroup itself. In this context, we refer to \cite{Anantharaman,Batty,Burq,Conti,Rozendaal}, among others.

An important intermediate step from Gearhart-Pr\"{u}ss Theorem to results by Rozendaal et al.~\cite{Rozendaal} was the Batty-Duyckaerts Theorem (Theorem \ref{Duyckaertstheorem} below) which relates the decaying rates of $\|T(t)A^{-1}\|_{\mathcal{B}(X)}$, $i {\mathbb{R}}  \subset \varrho(A)$, with the arbitrary growth of the norm of the resolvent of the generator. In order to properly recall such result, we need some preliminaries.

For every $A$, the generator of a bounded $C_0$-semigroup $(T(t))_{t \geq 0}$ on a Banach space $X$, with $i {\mathbb{R}}  \subset \varrho(A)$, consider a continuous non-decreasing function
\[M(y) := \max_{\lambda \in [-y,y]} \|R(i\lambda,A)\|_{\mathcal{B}(X)}, \quad y \geq 0,\]
and the associated function
\[ M_{\log}(y) := M(y) (\log(1 + M(y)) + \log(1 + y)), \quad  y \geq 0.  \] 
Denote by $M_{\log}^{-1}:[M_{\log}(0), \infty)\rightarrow\mathbb{R}$ the inverse of $M_{\log}$.

\begin{theorem}[Theorem 1.5 in \cite{Batty1}]\label{Duyckaertstheorem} Let ~$(T(t))_{t\geq 0}$ be a bounded ~$C_0$-semigroup on a Banach space~$X$, with generator~$A$ such that $i {\mathbb{R}}  \subset \varrho(A)$. Then, there exists $C>0$ such that
\begin{equation}\label{Duyckaerts}
\|T(t)A^{-1}\|_{\mathcal{B}(X)} = O(1/M_{\log}^{-1}(t/C)), \quad t \rightarrow \infty.   
\end{equation}
\end{theorem}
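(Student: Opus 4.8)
The plan is to recover $T(t)A^{-1}$ from the resolvent via an inverse-Laplace/contour-integral representation and then to deform the contour into the left half-plane as far as the growth bound $M$ permits, optimising the deformation against time $t$.

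First I would analytically continue the resolvent across the imaginary axis. From the Neumann series $R(\lambda,A)=R(is,A)\sum_{n\geq 0}\big((is-\lambda)R(is,A)\big)^{n}$, which converges whenever $|is-\lambda|\,\|R(is,A)\|<1$, one obtains that $R(\cdot,A)$ extends holomorphically to a region $\Omega=\{\lambda:\mathrm{Re}\,\lambda>-c/M(|\mathrm{Im}\,\lambda|)\}$ for a suitable $c>0$, with $\|R(\lambda,A)\|\leq C\,M(|\mathrm{Im}\,\lambda|)$ there. This is the only point at which the hypothesis $i\mathbb{R}\subset\varrho(A)$ and the function $M$ enter the geometry of the problem. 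Next, since $0\in\varrho(A)$ the operator $A^{-1}$ is bounded, and for $\mathrm{Re}\,\lambda>0$ one has $\int_{0}^{\infty}e^{-\lambda t}T(t)\,dt=R(\lambda,A)$; inverting this Laplace transform gives the Bromwich representation of $T(t)$ along a vertical line $\mathrm{Re}\,\lambda=\sigma>0$. To make the integral converge and to exhibit the $A^{-1}$-smoothing I would work with $R(\lambda,A)A^{-1}$, equivalently insert the factor $\lambda^{-1}$, and integrate by parts, so that the integrand decays fast enough in the imaginary direction to justify the manipulations.

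The heart of the argument is the deformation of the vertical contour into $\Omega$. For each $t$ I would choose a height $R=R(t)>0$ and run the contour along $\mathrm{Re}\,\lambda=-\delta$ for $|\mathrm{Im}\,\lambda|\leq R$, with $\delta$ just below the admissible depth $c/M(R)$, closing it off through the boundary of $\Omega$ for $|\mathrm{Im}\,\lambda|>R$. On the shifted vertical segment the factor $e^{\lambda t}$ supplies decay $e^{-\delta t}$, while the resolvent contributes $M(R)$ and the extra $\lambda^{-1}$ renders the imaginary integration nearly convergent. Collecting terms yields a bound of the form $e^{-\delta t}M(R)$ times slowly growing factors coming from the length of the contour and its tails.

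Finally I would optimise the two parameters. Taking $\delta\sim 1/M(R)$ forces $e^{-\delta t}\sim e^{-t/(CM(R))}$, and choosing $R=R(t)$ to balance this exponential against the prefactors produces, after passing to logarithms, exactly the scale $1/M_{\log}^{-1}(t/C)$: the term $\log(1+M(R))$ arises because $\delta t$ must dominate the logarithm of the prefactor while $\delta\sim 1/M$, and the term $\log(1+R)$ comes from the $\lambda^{-1}$-regularised integration over the unbounded contour. The main obstacle I anticipate is precisely this last step together with the control of the unbounded portions of the deformed contour: since $M$ may grow arbitrarily fast, justifying Cauchy's theorem across $\Omega$, keeping the tail integrals finite, and carrying out the coupled optimisation in $\delta$ and $R$ sharply enough to recover the logarithmic corrections --- rather than a cruder bound --- is the delicate technical core.
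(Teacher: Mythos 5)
This theorem is not actually proved in the paper: it is quoted as Theorem 1.5 of Batty--Duyckaerts \cite{Batty1}, and the paper only remarks that the original proof ``uses a technique developed by Korevaar'' and ``makes use of Cauchy's Theorem and Neumann series expansions.'' So your sketch must be measured against the Batty--Duyckaerts argument itself. Two of your three ingredients match it exactly: the Neumann-series continuation of the resolvent to a region $\Omega=\{\lambda:\mathrm{Re}\,\lambda>-c/M(|\mathrm{Im}\,\lambda|)\}$ with $\|R(\lambda,A)\|\leq C M(|\mathrm{Im}\,\lambda|)$ there, and the final optimisation (depth $\delta\sim 1/M(R)$, then choice of $R=R(t)$ balancing $e^{-\delta t}$ against prefactors of size $M(R)$ and $R$, which is precisely what produces $M_{\log}^{-1}$).

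The gap is in the middle step, and it is genuine: the plain Bromwich-plus-deformation scheme does not close. Once the contour is pushed into the left half-plane, its unbounded tails ($|\mathrm{Im}\,\lambda|>R$, running at depth $\sim c/M(|\mathrm{Im}\,\lambda|)$) carry an integrand of size roughly $M(|\mathrm{Im}\,\lambda|)/|\lambda|$ with essentially no help from $e^{\lambda t}$, since $\mathrm{Re}\,\lambda\to 0$ along the tail; as $M$ is non-decreasing this is never integrable, and integration by parts makes matters worse, because each derivative of the resolvent costs $\|R'(\lambda,A)\|\leq\|R(\lambda,A)\|^{2}\leq CM^{2}$, so no fixed number of integrations by parts tames the tails when $M$ grows arbitrarily fast (which is exactly the regime the theorem addresses). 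Korevaar's device, which is the missing idea, eliminates unbounded contours altogether: one fixes the height $R$, works with a \emph{closed} contour, compares $R(\lambda,A)$ with the \emph{entire} function $T_t(\lambda)=\int_0^t e^{-\lambda s}T(s)\,ds$ via the identity $R(\lambda,A)-T_t(\lambda)=e^{-\lambda t}R(\lambda,A)T(t)$ (valid for $\mathrm{Re}\,\lambda>0$), and inserts Newman's multiplier $\frac{1}{\lambda}+\frac{\lambda}{R^{2}}=\frac{R^{2}+\lambda^{2}}{\lambda R^{2}}$, whose zeros at $\lambda=\pm iR$ both cancel the $1/\mathrm{Re}\,\lambda$ blow-up of the resolvent on the right semicircle (there $\left|\frac{1}{\lambda}+\frac{\lambda}{R^{2}}\right|=\frac{2|\mathrm{Re}\,\lambda|}{R^{2}}$) and kill the junction contributions at $\pm iR$, so that every piece of the contour is bounded uniformly in $t$ by terms of the form $O(1/R)$ and $e^{-\delta t}M(R)$. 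With that identity in hand, your optimisation step goes through essentially verbatim. You did flag the tails as ``the delicate technical core,'' but the resolution is not sharper bookkeeping on an unbounded contour --- it is the replacement of that contour by the closed-contour comparison, without which the argument fails.
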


For a refinement of (\ref{Duyckaerts}) on Hilbert spaces, see \cite{Batty,Borichev} (see also Theorem \ref{theopolstab} ahead).
 
The proof of Theorem \ref{Duyckaertstheorem} by Batty and Duyckaerts \cite{Batty1}, which uses a technique developed by Korevaar \cite{Korevaar}, makes use of Cauchy's Theorem and Neumann series expansions. Usually, the problem of obtaining lower bounds for the decaying rates of stable bounded $C_0$-semigroups passes through the understanding of some theory of integral representation (like, for instance, Cauchy's theory and  the functional calculus of sectorial operators \cite{Batty,Batty1}). In this paper, we use the joint resolution of the identity for normal operators \cite{Berezannskii,Samoilenko} and a result due to Muller and Tomilov \cite{Muller} (see Theorem \ref{theoMuller1} ahead) to find the typical asymptotic behavior, in Baire's sense, of the orbits of normal $C_0$-semigroups of contractions. We also  say something about non-normal semigroups. To the best of our knowledge, none of this has been detailed in the literature yet. 


\subsection{Brief discussion of our main results}

\noindent Let, for every $\lambda \in\mathbb {\mathbb{C}}$ and every $t \geq 0,$ $g_{t}(\lambda) = e^{t\lambda}$, and let~$N$ be a normal operator on a Hilbert space~$\mathcal{H}$; denote by $\varrho(N)$ and~$\sigma(N)$ the resolvent set and spectrum of~$N$, respectively, and by $ R(\lambda)$ the real part of the complex number~$\lambda$. We note that if $ \{ z \in {\mathbb{C}} : R(z) > 0\} =: {\mathbb{C}}_+ \subset \varrho(N)$, then, by the spectral functional calculus, $(e^{tN})_{t \geq 0}:=(g_t(N))_{t\geq 0}$ is a normal $C_0$-semigroup of contractions generated by~$N$. It is well known that every normal $C_0$-semigroup of contractions is of this form~\cite{Rudin}.

As discussed previously, after Batty and Duyckaerts \cite{Batty1} have related the decaying rates of stable bounded $C_0$-semigroups to the arbitrary growth of the norms of the respective resolvents, the study of polynomial and logarithmic scales of such rates has been the subject of many recent papers (see \cite{Batty,Borichev,Rozendaal} and references therein). In contrast with this setting, our first result says that the decaying rates of the orbits of normal $C_0$-semigroups of contractions, typically in Baire's sense, may depend on sequences of time going to infinity. 

\begin{theorem}\label{maintheoremnormal} Let~$N$ be a normal operator in $\mathcal{H}$ such that $\sup \{ R(\lambda): \lambda \in \sigma(N)\} = 0$ and let $\alpha,\beta:{\mathbb{R}}_+ \longrightarrow (0,\infty)$ be real functions so that
\[\lim_{t \to \infty} \alpha(t) = \infty \quad and \quad \lim_{t \to \infty} \beta(t)e^{-t \epsilon }= 0, \ \forall \epsilon>0.\]
 Suppose that $(e^{tN})_{t\geq 0}$ is stable. Then, 
\[{\mathcal{G}}_N(\alpha,\beta) :=\{x \displaystyle\mid\limsup_{t \to \infty} \alpha(t) \Vert e^{tN}x\Vert_{\mathcal{H}} = \infty \quad and \quad \liminf_{t \to \infty} \beta(t) \Vert e^{tN}x\Vert_{\mathcal{H}} = 0\}\]
is a dense $G_\delta$ set in~$\mathcal{H}$. Moreover, the assumption on $\beta$ is optimal, that is, $\beta$ cannot be chosen to grow faster than sub-exponentially. 
\end{theorem}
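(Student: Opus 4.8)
The plan is to write $\mathcal{G}_N(\alpha,\beta) = \mathcal{A}\cap\mathcal{B}$, where $\mathcal{A} = \{x : \limsup_{t\to\infty}\alpha(t)\|e^{tN}x\|_{\mathcal{H}} = \infty\}$ and $\mathcal{B} = \{x : \liminf_{t\to\infty}\beta(t)\|e^{tN}x\|_{\mathcal{H}} = 0\}$, and to prove that each factor is a dense $G_\delta$ set; since $\mathcal{H}$ is complete, Baire's theorem then yields that the intersection is again dense $G_\delta$. The $G_\delta$ structure is routine from the continuity of $x\mapsto\|e^{tN}x\|_{\mathcal{H}}$: the identities $\mathcal{A} = \bigcap_{m,n}\bigcup_{t\geq n}\{x:\alpha(t)\|e^{tN}x\|_{\mathcal{H}}>m\}$ and $\mathcal{B} = \bigcap_{m,n}\bigcup_{t\geq n}\{x:\beta(t)\|e^{tN}x\|_{\mathcal{H}}<1/m\}$ exhibit each as a countable intersection of open sets, so the whole problem reduces to the two density statements.

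Throughout I would use the joint resolution of the identity $E$ of $N$, so that $\|e^{tN}x\|_{\mathcal{H}}^{2} = \int_{\sigma(N)} e^{2tR(\lambda)}\,d\mu_x(\lambda)$ with $\mu_x = \|E(\cdot)x\|_{\mathcal{H}}^{2}$. Two spectral facts drive everything. First, stability of $(e^{tN})_{t\geq0}$ is equivalent to $E(\{R(\lambda)=0\}) = 0$; combined with $\sup\{R(\lambda):\lambda\in\sigma(N)\}=0$ this forces $E(\{-\eta<R(\lambda)<0\})\neq 0$ for every $\eta>0$, since otherwise a small open disc about a boundary spectral point would carry no $E$-mass, contradicting that $\sigma(N)$ is the support of $E$. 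Second, $E(\{R(\lambda)<0\}) = E(\mathbb{C}) = I$, so the truncations $E(\{R(\lambda)\leq -c\})y$ converge to $y$ as $c\downarrow 0$. Density of $\mathcal{B}$ is then the easy half and is exactly where the growth hypothesis on $\beta$ enters: given $y$ and $\delta>0$, choose $c>0$ with $\|E(\{R(\lambda)\leq -c\})y - y\|_{\mathcal{H}}<\delta$ and set $x = E(\{R(\lambda)\leq -c\})y$; since $\mu_x$ is supported in $\{R(\lambda)\leq -c\}$ one has $\|e^{tN}x\|_{\mathcal{H}}\leq e^{-ct}\|x\|_{\mathcal{H}}$, and taking $\epsilon = c$ in $\beta(t)e^{-\epsilon t}\to 0$ gives $\beta(t)\|e^{tN}x\|_{\mathcal{H}}\to 0$, so $x\in\mathcal{B}$.

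Density of $\mathcal{A}$ is the main obstacle, and here I would either invoke the Müller--Tomilov result (Theorem~\ref{theoMuller1}) or carry out a direct ``spike'' construction. In the direct route, given $y$ and $\delta$, I pick pairwise $E$-orthogonal unit vectors $u_k$ with $\mathrm{supp}\,\mu_{u_k}\subset\{-\eta_k<R(\lambda)<0\}$ for a sequence $\eta_k\downarrow 0$ (possible by the spectral fact above, choosing the supports in disjoint thin strips approaching the imaginary axis), and set $x = y+\sum_k c_k u_k$ with $\sum_k c_k^{2}<\delta^{2}$. Choosing times $t_k\to\infty$ with $\alpha(t_k)\geq 4k/c_k$, then $\eta_k$ small enough that $e^{-t_k\eta_k}\geq 1/2$ and $t_k$ large enough that $\|e^{t_kN}y\|_{\mathcal{H}}<c_k/4$ (using stability), orthogonality gives $\|e^{t_kN}x\|_{\mathcal{H}}\geq\tfrac14 c_k$, whence $\alpha(t_k)\|e^{t_kN}x\|_{\mathcal{H}}\geq k\to\infty$ and $x\in\mathcal{A}$. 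The delicate point is the simultaneous bookkeeping: matching $\eta_k$, $c_k$, and $t_k$ so that the humps remain summable (keeping $x$ within $\delta$ of $y$) while staying visible past the decaying contribution of $y$.

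Finally, for optimality I would exhibit a single normal $N$ satisfying the hypotheses for which the conclusion fails once $\beta$ is allowed exponential growth. Take $N$ diagonal with eigenvalues $\lambda_n = -1/n + in$, so that $\sigma(N)=\{\lambda_n:n\geq1\}$ is closed, $\sup_n R(\lambda_n)=0$ is not attained, hence $E(\{R(\lambda)=0\})=0$ and the semigroup is stable, while $R(\lambda_n)\geq -1$ forces $\|e^{tN}x\|_{\mathcal{H}}\geq e^{-t}\|x\|_{\mathcal{H}}$ for every $x$. Then for $\beta(t)=e^{2t}$ one has $\beta(t)\|e^{tN}x\|_{\mathcal{H}}\geq e^{t}\|x\|_{\mathcal{H}}\to\infty$ whenever $x\neq0$, so $\mathcal{B}=\{0\}$ and $\mathcal{G}_N(\alpha,\beta)$ is not dense. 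Since this $\beta$ violates $\lim_{t\to\infty}\beta(t)e^{-\epsilon t}=0$ for $0<\epsilon<2$, this shows that $\beta$ cannot be permitted to grow as fast as an exponential, which is the asserted optimality.
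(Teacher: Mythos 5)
Your reduction to two dense $G_\delta$ sets, the $G_\delta$ identities, and your proof that $\mathcal{B}$ is dense are exactly the paper's (the paper truncates with $x_k := E^{N}(S_k)x$, $S_k = (-\infty,-1/k)\cup\{0\}\cup(1/k,\infty)$, which is your $E(\{R(\lambda)\leq -c\})y$). Where you genuinely diverge is the density of $\mathcal{A}$: the paper deduces it from the M\"{u}ller--Tomilov theorem (Theorem \ref{theoMuller1}) via Lemma \ref{lemmanormal}, which produces a single vector $\overline{x}$ with $\mu_{\overline{x}}^N(B(0,1/t_j))\geq 1/\alpha(t_j)$ along a sequence $t_j\to\infty$, and then perturbs an arbitrary $x$ by $\frac{1}{k}\overline{x}$ plus a spectral truncation; your ``spike'' construction builds the perturbation by hand from unit vectors in the ranges of $E$ over disjoint strips accumulating at the imaginary axis, and thus needs no external input at all. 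Your route is correct, and it is more elementary and self-contained, but two points deserve to be spelled out. First, the existence of infinitely many \emph{disjoint} strips with nonzero $E$-mass needs one extra line beyond your spectral fact: if every set $\{-\eta<R(\lambda)\leq -b\}$, $b\in(0,\eta)$, had zero $E$-mass, continuity from below would give $E(\{-\eta<R(\lambda)<0\})=0$, a contradiction; this also lets you choose the strips \emph{after} the times $t_k$, which resolves the ordering of choices ($c_k$, then $t_k$ using $\alpha(t)\to\infty$ and stability, then $S_k\subset\{-\ln 2/t_k<R(\lambda)<0\}$, then $u_k$). Second, the estimate $\Vert e^{t_kN}x\Vert_{\mathcal{H}}\geq \frac14 c_k$ must be run by projecting onto the $k$-th strip, i.e.\ $\Vert e^{t_kN}x\Vert_{\mathcal{H}}\geq \Vert e^{t_kN}(E(S_k)y+c_ku_k)\Vert_{\mathcal{H}}\geq c_k\Vert e^{t_kN}u_k\Vert_{\mathcal{H}}-\Vert e^{t_kN}y\Vert_{\mathcal{H}}$; a naive triangle inequality against the whole tail $\sum_{j\neq k}c_ju_j$ would fail, since that tail has norm comparable to $\delta$, not to $c_k$. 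What the paper's longer route buys in exchange is the quantitative spectral estimate of Lemma \ref{lemmanormal}, which it reuses for Corollary \ref{cortheoremnormal}.

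The optimality part is where you have a genuine gap: you prove a strictly weaker statement than the paper does. The ``moreover'' clause sits inside a theorem about a fixed but arbitrary $N$, and the paper proves it in that per-operator sense: for \emph{every} $N$ satisfying the hypotheses, every $x$ in the dense $G_\delta$ set $\mathcal{G}_N(\alpha)$ with $\alpha(t)=t$ has $\mu_x^{N_R}([-\epsilon,0])>0$ for all $\epsilon>0$, whence (by Jensen's inequality, or simply because $e^{2ty}\geq e^{-2t\epsilon}$ on $[-\epsilon,0]$)
\begin{equation*}
\Vert e^{tN}x\Vert_{\mathcal{H}}^2 \;\geq\; e^{-2t\epsilon}\,\mu_x^{N_R}([-\epsilon,0]),
\end{equation*}
so typical orbits decay slower than any exponential and any $\beta$ with $\liminf_{t\to\infty}\beta(t)e^{-\epsilon_0 t}>0$ makes $\mathcal{G}_N(\beta)$ disjoint from $\mathcal{G}_N(\alpha)$, hence meager --- for that very $N$. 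Your argument instead exhibits one diagonal operator (eigenvalues $-1/n+in$) and $\beta(t)=e^{2t}$ for which $\mathcal{B}=\{0\}$. The example itself is correct, and it does show the hypothesis on $\beta$ cannot be relaxed if the theorem is to hold for \emph{all} admissible $N$; but it says nothing about the arbitrary $N$ of the statement --- for instance, it leaves open whether an exponentially growing $\beta$ might still work for the discrete Laplacian of Example \ref{example12}. To close the gap, run the displayed lower bound for general $N$: your own spectral fact gives that $\{x : \mu_x^{N_R}([-\epsilon,0])>0 \ \forall \epsilon>0\}$ is a dense $G_\delta$ set, and on it the estimate above rules out every eventually-exponential $\beta$.
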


The next example shows that the asymptotic behavior of the orbit of a classical solution to (\ref{cauchyproblem}), in Baire's sense, may be atypical.

\begin{example}\label{example1} {\rm Let $\varphi: [2,\infty) \longrightarrow \mathbb{C}$ given by the action $\varphi(y) := \frac{1}{\ln y} + iy$, and then define $M_\varphi : {\mathcal D}(M_\varphi)\subset {\mathrm L}^2[2,\infty) \longrightarrow {\mathrm L}^2[2,\infty)$, 
\[(M_\varphi f)(y) = -\varphi(y) f(y),\] 
where $f\in{\mathcal{D}}(M_\varphi) := \{u \in {\mathrm L}^2[2,\infty) \mid  \varphi u \in {\mathrm L}^2[2,\infty)\}.$ $M_\varphi$ is a normal operator such that $\sigma(M_\varphi) = \{ -\frac{1}{\ln y} - iy, ~ y \geq 2\}$, which implies that $\sup \{ R(\lambda): \lambda \in \sigma(M_\varphi)\} = 0$. 

It is possible to show that  
\begin{equation*}
\Vert e^{tM_\varphi}M_\varphi^{-1}\Vert_{{\mathcal{B}}({\mathrm L}^2[2,\infty))} = O(e^{-2\sqrt{t}})    
\end{equation*}
(this is Example 5.2 in~\cite{Batty}). In particular, there is $C>0$ such that, for each $x \in {\mathcal{D}}(M_\varphi)$ and each $t \geq 0$, 
\begin{equation}\label{ex1}
\|e^{tM_\varphi}x\|_{{\mathcal{H}}} \leq  C e^{-2\sqrt{t}} \|x\|_{{\mathcal{H}}}.    
\end{equation}
We note that (\ref{ex1}) is not the typical behavior of an orbit of such semigroup. Namely, let $\alpha(t) = t$ and $\beta(t) = t$, for all $t > 0$. If (\ref{ex1}) holds, then $x \in \mathcal{G}_{M_\varphi}^c(\alpha,\beta)=\mathcal{H}\setminus\mathcal{G}_{M_\varphi}(\alpha,\beta)$, which is,  by Theorem~\ref{maintheoremnormal}, a countable union of closed sets of empty interior in $\mathcal{H}$.  

Finally, we also note that, by Theorem~\ref{maintheoremnormal}, each typical orbit goes to zero at a fixed but arbitrarily slow rate for a sequence of time going to infinity and sub-exponentially fast for another one.}
\end{example}

With respect to Theorem \ref{maintheoremnormal}, Example~\ref{example12}  illustrates  that in some cases very regular initial data may belong to the typical set ${\mathcal{G}}_N(\alpha,\beta)$ with erratic behavior.

\begin{example}\label{example12}{\rm Let $A: \ell^2({\mathbb{Z}}) \longrightarrow \ell^2({\mathbb{Z}})$ be the linear operator given by  
\[(Au)_n  = u_{n-1}+u_{n+1}, \ n \in \mathbb{Z}.\]
It is known that $A$ is unitarily equivalent to the multiplication operator ${\mathcal{M}}_{\phi}$ on ${\mathrm{L}}^2[0,2\pi)$, with $\phi(x) = 2\cos(x)$, from which  follows that $A$ is a bounded self-adjoint operator with continuous spectrum $\sigma(A) = \sigma({\mathcal{M}}_\phi) = \sigma_{c}({\mathcal{M}}_\phi) =  [-2,2]$ (see \cite{Oliveira} for details).

Now we consider the discrete Laplacian, $\triangle$, given on ${\ell}^2({{\mathbb{Z}}})$ by the action
\[(\triangle u)_n = (Au)_n -2 u_n.\]
So, $\triangle$ is a bounded self-adjoint operator with continuous spectrum $\sigma(\triangle) = [-4,0]$, from which it follows that $(e^{t\triangle})_{t \geq 0}$ is stable but not exponentially stable; therefore, $\triangle$ satisfies the hypotheses of Theorem \ref{maintheoremnormal}. We note that for every $x \in {\mathcal{G}}_\triangle(\alpha,\beta)$, $(e^{t\triangle}x)_{t \geq 0}$ is infinitely differentiable since the discrete Laplacian is a bounded linear operator.}
\end{example}

Now we recall that every normal operator $N$ can be written as $N = N_R + iN_I$, where $N_R$ and $N_I$ are self-adjoint operators such that $N_RN_I = N_IN_R$. The next theorem, a new spectral classification of (strong) stability for normal $C_0$-semigroups of contractions, is a direct application of Gearhart-Pr\"{u}ss Theorem and Theorem \ref{maintheoremnormal}.

\begin{theorem}\label{theoremnormalclas} Let~$N= N_R + iN_I$ be a normal operator in $\mathcal{H}$ so that ${\mathbb{C}}_+ \subset \varrho(N)$. Then:
\begin{enumerate}
\item[i)] All orbits of  $(e^{tN})_{t\geq 0}$ converge to zero with exponential rate if, and only if, $0 \not \in \sigma(N_R)$.
\item[ii)]The set ${\mathcal{G}}_N(\alpha,\beta)$ is a dense $G_\delta$ set in~$\mathcal{H}$, for every $\alpha$ and every $\beta$  as in the statement of Theorem \ref{maintheoremnormal}, if and only if $0 \in \sigma(N_R)$ and 0 is not an eigenvalue of $N_R$.
\item[iii)] The set $\mathcal{F}_N$ of $x \in \mathcal{H}$ such that $(e^{tN}x)_{t\geq 0}$ does not converge to zero is a dense open set in~$\mathcal{H}$ if, and only if, 0 is an eigenvalue of $N_R$.
\end{enumerate}  
\end{theorem}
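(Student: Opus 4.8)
The plan is to reduce all three items to the scalar behaviour of the integral $\|e^{tN}x\|^2 = \int_{\sigma(N)} e^{2tR(\lambda)}\,d\mu_x(\lambda)$, where $E$ is the resolution of the identity of the normal operator $N$ and $\mu_x(\cdot)=\langle E(\cdot)x,x\rangle$. Since $\mathbb{C}_+\subset\varrho(N)$ forces $\sigma(N)\subset\{\lambda: R(\lambda)\le0\}$, and since $N_R = \int R(\lambda)\,dE(\lambda)$ gives $\sigma(N_R)=\overline{R(\sigma(N))}$, I would first record the two translations $0\in\sigma(N_R)\iff\sup\{R(\lambda):\lambda\in\sigma(N)\}=0$ and $E(\{R=0\})=E^{N_R}(\{0\})=:P$, the orthogonal projection onto the $0$-eigenspace of $N_R$. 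These convert the spectral hypotheses in (i)--(iii) into statements about $\mu_x$.

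I would prove (iii) first, as it is elementary and feeds the other items. Because $R(\lambda)\le0$, the integrand $e^{2tR(\lambda)}$ decreases to $\mathbf{1}_{\{R(\lambda)=0\}}$ as $t\to\infty$, so by monotone convergence $\lim_{t\to\infty}\|e^{tN}x\|^2=\mu_x(\{R=0\})=\|Px\|^2$. Hence $(e^{tN}x)\to0$ exactly when $Px=0$, i.e. $\mathcal{F}_N=\mathcal{H}\setminus\ker P$. As $\ker P$ is a closed subspace, $\mathcal{F}_N$ is always open, and it is dense (equivalently nonempty) iff $\ker P\neq\mathcal{H}$, i.e. iff $P\neq0$, i.e. iff $0$ is an eigenvalue of $N_R$; this is (iii). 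The same computation identifies strong stability of $(e^{tN})$ with $P=0$, that is, with $0$ \emph{not} being an eigenvalue of $N_R$, a fact I reuse below.

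For (i) I would invoke the Gearhart--Pr\"uss Theorem: the contraction semigroup $(e^{tN})$ is uniformly exponentially stable iff $i\mathbb{R}\subset\varrho(N)$ and $\sup_{s\in\mathbb{R}}\|R(is,N)\|<\infty$. By normality $\|R(is,N)\|=\mathrm{dist}(is,\sigma(N))^{-1}$, and since $\sigma(N)\subset\{R\le0\}$ one has $\inf_s\mathrm{dist}(is,\sigma(N))=\inf_{\lambda\in\sigma(N)}|R(\lambda)|$, which is positive precisely when $\sup R(\sigma(N))<0$, i.e. when $0\notin\sigma(N_R)$. Thus all orbits decay exponentially iff $0\notin\sigma(N_R)$; one checks directly, via $\|e^{tN}\|=e^{t\sup R(\sigma(N))}$, that for normal semigroups ``every orbit decays exponentially'' and ``uniform exponential decay'' coincide, so the two readings of (i) agree.

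Finally (ii) follows by combining the previous items with Theorem \ref{maintheoremnormal}. The hypotheses of that theorem — $\sup R(\sigma(N))=0$ together with stability of $(e^{tN})$ — are, by the translations above, exactly ``$0\in\sigma(N_R)$ and $0$ is not an eigenvalue of $N_R$''; so under this condition Theorem \ref{maintheoremnormal} yields that $\mathcal{G}_N(\alpha,\beta)$ is dense $G_\delta$ for every admissible $\alpha,\beta$. For the converse I would argue by contraposition in two cases. If $0\notin\sigma(N_R)$ then $\|e^{tN}x\|\le Me^{-\delta t}\|x\|$ for some $\delta>0$; choosing $\alpha(t)=t$ makes $\alpha(t)\|e^{tN}x\|\to0$ for all $x$, so $\mathcal{G}_N(\alpha,\beta)=\emptyset$ is not dense. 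If instead $0\in\sigma(N_R)$ but $0$ is an eigenvalue (so $P\neq0$), then for every $x$ with $Px\neq0$ we have $\|e^{tN}x\|\to\|Px\|>0$; choosing any admissible $\beta$ with $\beta(t)\to\infty$ (e.g.\ $\beta(t)=\log(e+t)$) forces $\liminf_t\beta(t)\|e^{tN}x\|=\infty$, so $\mathcal{G}_N(\alpha,\beta)\subset\ker P$, a proper closed subspace, and again $\mathcal{G}_N(\alpha,\beta)$ is not dense. The main obstacle I anticipate is this reverse direction of (ii): one must exhibit, in each failing case, a concrete admissible pair $(\alpha,\beta)$ breaking density, and the second case relies on the monotone-limit identity from (iii) to keep orbits with $Px\neq0$ bounded away from $0$.
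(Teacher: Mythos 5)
Your proposal is correct, and it follows the paper's overall skeleton --- spectral-theorem reduction to the measures $\mu_x$, Theorem \ref{maintheoremnormal} for the positive half of (ii), Gearhart--Pr\"uss for (i) --- but it diverges from the paper's proof in two places, both defensibly. First, for (iii): the paper proves openness of $\mathcal{F}_N$ via continuity of $x\mapsto\Vert E^{N}(\{0\})x\Vert_{\mathcal{H}}^2$ and proves density by an explicit perturbation $x_k=x+\frac{x_0}{k}$ along an eigenvector $x_0$; you instead identify $\mathcal{F}_N=\mathcal{H}\setminus\ker P$ outright and get openness \emph{and} density simultaneously, since the complement of a proper closed subspace of $\mathcal{H}$ is automatically open and dense. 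Second, and more substantively, for the converse of (ii): the paper takes $\alpha(t)=\beta(t)=t$, uses a Moore--Osgood interchange of limits to deduce stability of the semigroup from density of $\mathcal{G}_N(\alpha,\beta)$, and then invokes Gearhart--Pr\"uss to obtain $0\in\sigma(N_R)$ from the non-exponential decay of orbits in $\mathcal{G}_N(\alpha,\beta)$; your contrapositive argument --- exhibiting in each failing case a concrete admissible pair for which $\mathcal{G}_N(\alpha,\beta)$ is either empty or contained in the proper closed subspace $\ker P$ --- is more elementary, avoids the limit interchange (whose justification needs the uniform estimate $\Vert e^{tN}x_n-e^{tN}x\Vert_{\mathcal{H}}\le\Vert x_n-x\Vert_{\mathcal{H}}$, which the paper leaves implicit), and makes transparent exactly how the universal quantifier over $(\alpha,\beta)$ is being used. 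Both routes ultimately rest on the same identity $\lim_{t\to\infty}\Vert e^{tN}x\Vert_{\mathcal{H}}^2=\Vert E^{N_R}(\{0\})x\Vert_{\mathcal{H}}^2$, which you prove once (in (iii)) and reuse; the paper derives it inside the proofs of (ii) and (iii) separately.

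One point to tighten: in (i), the ``only if'' direction needs that when $0\in\sigma(N_R)$ some \emph{individual} orbit fails to decay exponentially, whereas the identity $\Vert e^{tN}\Vert_{\mathcal{B}(\mathcal{H})}=e^{t\sup R(\sigma(N))}$ only rules out \emph{uniform} exponential decay; ``one checks directly'' is not a proof. The gap is easily filled with tools you already have: if $0$ is an eigenvalue of $N_R$, your (iii) gives an orbit with non-vanishing norm, and if it is not, Theorem \ref{maintheoremnormal} (its optimality clause, or its dense-$G_\delta$ statement with $\alpha(t)=t$) yields orbits decaying slower than any exponential; alternatively, Datko's theorem gives the equivalence of orbitwise and uniform exponential decay for general $C_0$-semigroups. (The paper's own proof of (i), a one-line appeal to Gearhart--Pr\"uss, is equally terse on this reading of the statement.)
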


Our next result is a partial extension of Theorem \ref{maintheoremnormal} to non-normal semigroups. We recall that the exponential growth bound of a $C_0$-semigroup $(T(t))_{t\geq 0}$ on~$\mathcal{H}$ is defined as \cite{van}
\[\omega_0(T) := \lim_{t \to \infty} \frac{\ln \|T(t)\|_{\mathcal{B}(\mathcal{H})}}{t}.\]

\begin{theorem}\label{maintheoremrefineddecay}Let~$(T(t))_{t\geq 0}$ be a bounded $C_0$-semigroup on~$\mathcal{H}$  such that $\omega_0(T) = 0$, and let $A$ be its generator. Suppose that $A$ is injective and, for some $k \geq 1$, that
\begin{H1}\label{H2}
\|T(t)A^{-k}\|_{\mathcal{B}(\mathcal{H})} = O(r(t)), \quad t \rightarrow \infty,
\end{H1}
with $\displaystyle\lim_{t\to\infty}r(t)=0$. Let $\alpha,\beta:{\mathbb{R}}_+ \longrightarrow (0,\infty)$ be real functions so that
\[\lim_{t \to \infty} \alpha(t) = \infty \quad and \quad \liminf_{t \to \infty} \beta(t)r(t)= 0.\]
Then, 
\begin{equation*}
{\mathcal{G}}_A(\alpha,\beta) =\{x \displaystyle\mid \limsup_{t \to \infty} \alpha(t) \Vert T(t)x\Vert_{\mathcal{H}} = \infty \ {\rm and} \ \liminf_{t \to \infty} \beta(t) \Vert T(t)x\Vert_{\mathcal{H}} = 0\}    
\end{equation*}
is a dense $G_\delta$ set in~$\mathcal{H}$ .
\end{theorem}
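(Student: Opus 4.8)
The plan is to realize $\mathcal{G}_A(\alpha,\beta)$ as a countable intersection of \emph{dense} open sets and then invoke the Baire Category Theorem in the complete space $\mathcal{H}$. First I would rewrite the two defining conditions so as to expose the relevant open sets. Since $t\mapsto\|T(t)x\|_{\mathcal{H}}$ is continuous and $\alpha,\beta$ are positive, the condition $\limsup_{t\to\infty}\alpha(t)\|T(t)x\|=\infty$ is equivalent to $x\in\bigcap_{M,S\in\mathbb{N}}U_{M,S}$ with $U_{M,S}:=\bigcup_{t>S}\{x:\alpha(t)\|T(t)x\|>M\}$, while (using $\beta(t)\|T(t)x\|\ge 0$) the condition $\liminf_{t\to\infty}\beta(t)\|T(t)x\|=0$ is equivalent to $x\in\bigcap_{m,S\in\mathbb{N}}V_{m,S}$ with $V_{m,S}:=\bigcup_{t>S}\{x:\beta(t)\|T(t)x\|<1/m\}$. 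Each set in braces is open, hence every $U_{M,S}$ and $V_{m,S}$ is open and $\mathcal{G}_A(\alpha,\beta)$ is a $G_\delta$ set; by Baire it then suffices to prove that each of these building blocks is dense.

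For the sets $V_{m,S}$ (the $\liminf=0$ part) I would exhibit a dense subset contained in all of them. Injectivity of $A$ makes $A^{-k}$ well defined with $\rng(A^{-k})=\mathcal{D}(A^k)$; thus for $y\in\mathcal{D}(A^k)$ we may write $y=A^{-k}(A^k y)$, and hypothesis (\ref{H2}) gives constants $C>0$ and $t_0$ with $\|T(t)y\|\le C\,r(t)\,\|A^k y\|$ for $t\ge t_0$. Consequently $\beta(t)\|T(t)y\|\le C\|A^k y\|\,\beta(t)r(t)$ for large $t$, and since $\liminf_{t\to\infty}\beta(t)r(t)=0$ we obtain $\liminf_{t\to\infty}\beta(t)\|T(t)y\|=0$, so $y$ belongs to every $V_{m,S}$. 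As $\mathcal{D}(A^k)$ is dense in $\mathcal{H}$ (because $A$ generates a $C_0$-semigroup), each $V_{m,S}$ is dense.

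The hard part will be the density of the sets $U_{M,S}$ (the $\limsup=\infty$ part), and this is exactly where the hypothesis $\omega_0(T)=0$ must be used. I would argue by contradiction: were some $U_{M,S}$ not dense, there would be a ball $B(x_0,\eta)$ disjoint from it, i.e.\ $\|T(t)x\|\le M/\alpha(t)$ for every $x\in B(x_0,\eta)$ and every $t>S$. Comparing the orbits of $x_0$ and $x_0+u$ for $\|u\|\le\eta/2$, and then rescaling an arbitrary nonzero vector into this ball, yields a uniform bound $\|T(t)\|_{\mathcal{B}(\mathcal{H})}\le 4M/(\eta\,\alpha(t))$ for $t>S$. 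Since $\alpha(t)\to\infty$, this forces $\|T(t_1)\|_{\mathcal{B}(\mathcal{H})}<1$ for some $t_1>S$, whence $\omega_0(T)\le t_1^{-1}\ln\|T(t_1)\|_{\mathcal{B}(\mathcal{H})}<0$, contradicting $\omega_0(T)=0$. Therefore every $U_{M,S}$ is dense.

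Putting the pieces together, $\mathcal{G}_A(\alpha,\beta)$ is a countable intersection of dense open subsets of the complete metric space $\mathcal{H}$, so the Baire Category Theorem yields that it is a dense $G_\delta$ set, as claimed.
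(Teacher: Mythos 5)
Your proof is correct, and while your $G_\delta$ decomposition and your $\liminf$ half coincide with the paper's (both exhibit $\mathcal{D}(A^k)$, dense because $A$ generates a $C_0$-semigroup, inside every set $V_{m,S}$ via hypothesis~(\ref{H2})), your treatment of the $\limsup$ half takes a genuinely different and more elementary route. The paper proves density of $\{x : \limsup_{t\to\infty}\alpha(t)\Vert T(t)x\Vert_{\mathcal{H}} = \infty\}$ constructively: it applies the M\"uller--Tomilov theorem (Theorem~\ref{theoMuller1}; the required weak stability follows from~(\ref{H2}) together with boundedness of the semigroup) to obtain a single vector $x_0$ with $\Vert T(t)x_0\Vert_{\mathcal{H}} > (\alpha(t)+2)^{-1/2}$ for \emph{every} $t\geq 0$, and then shows that an arbitrary $x$ either already lies in this set or else the perturbations $x_k = x + x_0/k$ do, via the estimate $\Vert T(t_j)x_k\Vert_{\mathcal{H}}^2 \geq \Vert T(t_j)x_0\Vert_{\mathcal{H}}^2/(2k^2)$ along times where $\Vert T(t_j)x\Vert_{\mathcal{H}}$ is small relative to $\Vert T(t_j)x_0\Vert_{\mathcal{H}}$. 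You instead prove density of each open set $U_{M,S}$ by contradiction: a ball $B(y_0,\eta)$ disjoint from $U_{M,S}$ yields, by the triangle inequality and rescaling (linearity), the operator-norm bound $\Vert T(t)\Vert_{\mathcal{B}(\mathcal{H})} \leq 4M/(\eta\,\alpha(t))$ for $t>S$; since $\alpha(t)\to\infty$ this gives $\Vert T(t_1)\Vert_{\mathcal{B}(\mathcal{H})}<1$ for some $t_1$, hence $\omega_0(T)=\inf_{t>0}t^{-1}\ln\Vert T(t)\Vert_{\mathcal{B}(\mathcal{H})}<0$, contradicting $\omega_0(T)=0$. This is a valid and self-contained argument: it dispenses entirely with the M\"uller--Tomilov machinery, and indeed needs neither weak stability nor~(\ref{H2}) for this half of the proof, only $\omega_0(T)=0$. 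What the paper's route buys is stronger information: one universal vector whose orbit is bounded below by $1/\sqrt{\alpha(t)+2}$ at \emph{all} times, rather than vectors whose orbits are large only along some sequence of times; this explicit construction is what the authors point to in Remark~\ref{remarktheonorm} as an alternative to the spectral-measure argument used for the normal case.
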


\begin{remark}\label{remarkmaintheoremrefineddecay}
\end{remark}
\begin{enumerate}

\item[i)] We note that, by Theorem \ref{Duyckaertstheorem}, if $i {\mathbb{R}}  \subset \varrho(A)$, then hypothesis~(\ref{H2}) is satisfied. Namely, it is possible to show that (\ref{H2}) holds if and only if $i {\mathbb{R}}  \subset \varrho(A)$; for details, see \cite{Batty1}.
    
\item[ii)] The main dynamical difference between Theorems \ref{maintheoremnormal} and \ref{maintheoremrefineddecay} is that the former, under the perspective of this work, describes the exact (large time) asymptotic behavior of normal $C_0$-semigroups of contractions, while the main spectral difference between Theorems \ref{maintheoremnormal} and \ref{maintheoremrefineddecay} is that in Theorem \ref{maintheoremnormal}, one may have $\sigma(N)\cap i\mathbb{R}\neq\emptyset$. Namely, thanks to the Spectral Theorem, we do not need to use hypothesis~(\ref{H2}) to prove it.     
    
\item[iii)]Suppose that there exists an $a>0$ such that $\|T(t)A^{-1}\|_{\mathcal{B}(\mathcal{H})} = O(t^{-a})$. Since, for every $k \geq 1$,
\[\|T(t)A^{-k}\|_{\mathcal{B}(\mathcal{H})} = \|[T(t/k)A^{-1}]^k\|_{\mathcal{B}(\mathcal{H})},\]
one has 
\[\|T(t)A^{-k}\|_{\mathcal{B}(\mathcal{H})} = O(t^{-ka}).\]
Thus, it follows from Theorem \ref{maintheoremrefineddecay} that
\[\bigcap_{k \geq 1}\{x  \mid \limsup_{t \to \infty} \alpha(t) \Vert T(t)x\Vert_{\mathcal{H}} = \infty \ {\rm and} \ \liminf_{t \to \infty} t^{ka/2}  \Vert T(t)x\Vert_{\mathcal{H}} = 0\}\]
is a dense $G_\delta$ set in ${\mathcal{H}}$. One should compare this with Example \ref{example1}: the asymptotic behavior of the orbit of a classical solution to (ACP), in Baire's sense, is atypical.
\end{enumerate}

Consider the following result. 

\begin{theorem}[Theorem 1.3 in~\cite{Batty}] \label{theopolstab} Let $(T(t))_{t\geq 0}$ be a bounded $C_0$-semigroup on a Hilbert space~$\mathcal{H}$, with generator~$A$, so that $i {\mathbb{R}}  \subset \varrho(A)$. Then, given $a>0$ and $b\in {\mathbb{R}}$, the following assertions are equivalent:  
\begin{enumerate}
\item[i)] $\|(is I - A)^{-1}\|_{\mathcal{B}(\mathcal{H})} = O(|s|^a(\ln|s|)^{-b}),  \quad |s| \rightarrow \infty,$
\item[ii)] $\|T(t)A^{-1}\|_{\mathcal{B}(\mathcal{H})} = O(t^{-\frac{1}{a}} (\ln t)^{-b/a}), \quad t \rightarrow \infty.$
\end{enumerate}
\end{theorem}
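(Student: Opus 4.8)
Since the statement is an equivalence between a resolvent growth bound and a semigroup decay rate, the plan is to prove the two implications separately. I expect (ii) $\Rightarrow$ (i) to be elementary and valid on any Banach space, while (i) $\Rightarrow$ (ii) is the deep direction in which the Hilbert space geometry, through Plancherel's theorem, is indispensable; this is where I expect the real difficulty to lie.

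For the elementary direction (ii) $\Rightarrow$ (i), write $r(t):=t^{-1/a}(\ln t)^{-b/a}$, so that the hypothesis reads $\|T(t)A^{-1}\|_{\mathcal B(\mathcal H)}\le C\,r(t)$. Fix $s\in\mathbb R$ with $|s|$ large, set $\rho:=\|R(is,A)\|_{\mathcal B(\mathcal H)}$, and for a unit vector $x$ put $y:=R(is,A)x$, so that $(A-is)y=-x$. Differentiating $e^{-is\sigma}T(\sigma)y$ and integrating gives the identity
\[ y \;=\; e^{-ist}T(t)y \;+\; \int_0^t e^{-is\sigma}T(\sigma)x\,d\sigma, \qquad t\ge 0. \]
The last integral is bounded by $tM$ with $M:=\sup_{t\ge0}\|T(t)\|$, while $T(t)y=T(t)A^{-1}\,[AR(is,A)x]$ together with $\|AR(is,A)\|\le |s|\rho+1$ gives $\|T(t)y\|\le C\,r(t)(|s|\rho+1)$. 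Taking the supremum over $x$ yields $\rho\le C\,r(t)(|s|\rho+1)+tM$, and choosing $t=t(s)$ so that $C|s|r(t)\le\tfrac12$, i.e. $t\asymp r^{-1}(c/|s|)$, absorbs the $\rho$ on the right and leaves $\rho\lesssim r^{-1}(c/|s|)$. A direct inversion of $r$ (taking logarithms shows $\ln t\approx -a\ln u$, hence $r^{-1}(u)\asymp u^{-a}(\ln(1/u))^{-b}$) gives $r^{-1}(c/|s|)\asymp |s|^{a}(\ln|s|)^{-b}$, which is exactly (i). No inner product is used here.

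For the substantial direction (i) $\Rightarrow$ (ii) I would follow the Borichev--Tomilov scheme, refined to carry the logarithmic factor. First, from (i) and the Neumann series $R(\lambda,A)=\sum_{n\ge0}(is-\lambda)^nR(is,A)^{n+1}$, I would extend the resolvent analytically to the region $\Omega=\{\,\eta+is:\eta\ge-\delta(s)\,\}$, where the admissible width is governed by $1/\|R(is,A)\|$, namely $\delta(s)\asymp |s|^{-a}(\ln|s|)^{b}$, and $\|R(\lambda,A)\|=O(|s|^{a}(\ln|s|)^{-b})$ holds throughout $\Omega$. Second, I would represent the orbit through an inverse-Laplace/contour integral
\[ T(t)A^{-1} \;=\; \frac{1}{2\pi i}\int_{\partial\Omega} e^{\lambda t}\,R(\lambda,A)A^{-1}\,d\lambda, \]
so that along $\partial\Omega$ the factor $e^{\lambda t}$ supplies the gain $e^{-\delta(s)t}$. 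Third, and crucially, I would control the integral not by absolute values but in $L^2$: the orbit $t\mapsto T(t)A^{-1}x$ lies in $L^2(0,\infty;\mathcal H)$ with boundary Laplace transform $s\mapsto R(is,A)A^{-1}x$, so Plancherel's theorem estimates the time-integral of the orbit by a frequency integral of the (extended) resolvent; combined with the exponential gain from the contour shift, this delivers the sharp $L^2$-in-time decay, which a standard real-variable refinement promotes to the pointwise rate $t^{-1/a}$.

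The main obstacle will be precisely this third step: a naive absolute-value estimate of the contour integral loses the sharp exponent and does not close, so one must exploit the $L^2$ isometry special to Hilbert space and optimize the contour depth $\delta(s)$ against $t$ (balancing $e^{-\delta(s)t}$ against the resolvent growth gives $|s|\asymp t^{1/a}$, whence $t^{-1/a}$). Superimposed on this is the bookkeeping of the slowly varying factors $(\ln|s|)^{-b}$, which must be tracked consistently through the resolvent extension, the contour estimate, and the final inversion in order to land the exponent $(\ln t)^{-b/a}$; this is the additional technical layer beyond the purely polynomial Borichev--Tomilov statement.
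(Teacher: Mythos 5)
First, a structural point: the paper does not prove this statement at all. Theorem \ref{theopolstab} is imported verbatim from the literature (it is Theorem 1.3 of Batty--Chill--Tomilov \cite{Batty}) and is used in this paper only as an ingredient to be combined with Theorem \ref{maintheoremrefineddecay}. So there is no internal proof to compare against, and your proposal must stand on its own. Your direction (ii) $\Rightarrow$ (i) does: the identity $y=e^{-ist}T(t)y+\int_0^t e^{-is\sigma}T(\sigma)x\,d\sigma$ for $y=R(is,A)x$, the bound $\|AR(is,A)\|_{\mathcal{B}(\mathcal{H})}\leq|s|\,\rho+1$, the absorption of $\rho$ by choosing $t$ with $C|s|r(t)\leq\tfrac12$, and the asymptotic inversion $r^{-1}(u)\asymp u^{-a}(\ln(1/u))^{-b}$ are all correct; this is the standard Banach-space argument and is essentially complete.

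The hard direction (i) $\Rightarrow$ (ii), however, is a plan rather than a proof, and the plan has a concrete gap at precisely the step you call crucial. You assert that $t\mapsto T(t)A^{-1}x$ lies in $L^2(0,\infty;\mathcal{H})$ with Plancherel control by $s\mapsto R(is,A)A^{-1}x$. But the resolvent identity gives $R(is,A)A^{-1}=\bigl(R(is,A)+A^{-1}\bigr)/(is)$, hence only $\|R(is,A)A^{-1}\|_{\mathcal{B}(\mathcal{H})}=O\bigl(|s|^{a-1}(\ln|s|)^{-b}\bigr)$, which is square-integrable at infinity only when $a<1/2$ (or $a=1/2$, $b>1/2$); for general $a>0$ the frequency-side integral diverges and the argument does not close. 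The known repair --- due to Borichev--Tomilov in the polynomial case and Batty--Chill--Tomilov with the logarithmic factors --- is to run the Plancherel argument on $T(t)A^{-n}$ for $n$ large enough that $R(i\cdot,A)A^{-n}$ is square-integrable, and then to descend from $A^{-n}$ back to $A^{-1}$ via a reduction lemma of Batty--Duyckaerts type (their Proposition 3.1), which converts a bound $\|T(t)A^{-n}\|_{\mathcal{B}(\mathcal{H})}=O(f(t)^n)$ into $\|T(t)A^{-1}\|_{\mathcal{B}(\mathcal{H})}=O(f(t))$; moreover, the promotion from $L^2$-in-time to pointwise decay is not a generic ``real-variable refinement'' but the specific semigroup-boundedness trick $t\,\|T(t)y\|_{\mathcal{H}}^2\leq M^2\int_0^t\|T(s)y\|_{\mathcal{H}}^2\,ds$. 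None of these ingredients appear in your outline, and the logarithmic bookkeeping you defer is exactly what forced Batty--Chill--Tomilov to develop their machinery of regularly varying functions. So: the elementary direction is done; the deep direction remains open in your write-up.
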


\begin{remark}
\end{remark}
\begin{enumerate}
\item[i)] Although the condition $b\ge 0$ is presented in the statement of Theorem \ref{theopolstab} in \cite{Batty}, it is not necessary to impose any restriction on $b$, as it was shown in Theorem 1.1 in \cite{Rozendaal}.
    
\item[ii)] We note that Theorem~\ref{maintheoremrefineddecay} can be naturally combined with Theorem~\ref{theopolstab} in order to produce refined scales of decaying rates of bounded $C_0$-semigroups. Namely, if we replace condition~\eqref{H2} in Theorem~\ref{maintheoremrefineddecay} by the condition depicted in item~ii) of Theorem~\ref{theopolstab}, then each typical orbit of the semigroup contains a sequence that decays to zero no faster than a fixed but arbitrarily slow rate, and, by Remark \ref{remarkmaintheoremrefineddecay}, a sequence that decays to zero at a rate super-polynomial. In this sense, we show that typical orbits display unexpected and erratic behavior.
     
\item[iii)] There are in the literature numerous examples of evolution equations whose associated semigroups satisfy the assumptions of Theorem~\ref{maintheoremrefineddecay}  (see \cite{Batty,Burq,Conti,Rozendaal} and references therein); particularly, such result also applies to some damped wave equations \cite{Anantharaman,Stahn}.
\end{enumerate}


\subsection{Organization of text}

\noindent The organization of this paper is as follows. In Section \ref{Fine scales} we present a detailed study of the relation between the (polynomial) decaying rates of a normal semigroup and the local scale spectral properties of its generator; in particular, we prove Theorems~\ref{maintheoremnormal} and~\ref{maintheoremrefineddecay}. 

Some words about notation: $R(z)$ denotes the real part of $z \in \mathbb{C}$,  ${\mathbb{C}}_+ = \{ z \in {\mathbb{C}} : R(z) > 0\}$ and~$\mathcal{H}$ denotes a complex Hilbert space. Let $\mathcal{B}(\mathcal{H})$ denote the space of all bounded linear operators on~$\mathcal{H}$. If~$A$ is a linear operator in~$\mathcal{H}$, we denote its domain by ${\mathcal{D}}(A)$, its kernel by ${\rm kernel}(A)$, its spectrum by $\sigma(A)$  and the respective resolvent set by~$\varrho(A)$. For each $x \in {\mathbb{R}}$ and each $r>0$, $B(x,r)$ denotes the open interval $(x-r,x+r)$. Finally, $\| \cdot \|_{\mathcal{B}(\mathcal{H})}$ denotes the norm in $\mathcal{B}(\mathcal{H})$, and $\| \cdot \|_{\mathcal{H}}$  the norm in~$\mathcal{H}$.


\section{Fine scales of decaying rates}\label{Fine scales}


\subsection{Normal semigroups: polynomial decaying rates $\times$ spectral properties}

\noindent It follows from the Spectral Theorem that every normal operator $N$ on a Hilbert space~$\mathcal{H}$, with ${\mathbb{C}}_+ \subset \varrho(N)$, generates a normal $C_0$-semigroup of contractions; namely,
\[e^{tN} = \int_{\sigma(N)} e^{t\lambda} \quad dE^N(\lambda),\]
where $E^N$ is the resolution of the identity of $N$. It is known that every normal $C_0$-semigroup of contractions is of this form \cite{Rudin}.

We recall that every normal operator $N$ can be written as $N = N_R + iN_I$, where 
\[N_R = \frac{N+N^*}{2} \quad {\rm and} \quad N_I = -i\frac{N-N^*}{2}\]
are self-adjoint operators and $N_RN_I = N_IN_R$. In this case, $E^N$ corresponds  to a joint resolution of the identity associated with the operator pair $\{N_R,N_I\}$ \cite{Berezannskii,Samoilenko}. Thus, for every $x \in {\mathcal{H}}$, $\Vert x\Vert_{\mathcal{H}} = 1$, 

\begin{eqnarray}\label{real}
\nonumber \Vert e^{tN}x\Vert_{{\mathcal{H}}}^2 &=&\|e^{t(N_R+i N_I)}x\|_{{\mathcal{H}}}^2\\ \nonumber &=& \int_{\sigma(N_R) \times  \sigma(N_I)}  
|e^{t(y+iv)}|^2 \quad d\mu_x^{N_R}(y) \, d\mu_x^{N_I}(v)\\ \nonumber &=& \int_{\sigma(N_I)} 1 \,  d\mu_x^{N_I}(v) \, \int_{\sigma(N_R)} e^{2ty}  \quad d\mu_x^{N_R}(y)\\  &=& \int_{-\infty}^0   e^{2ty} d\mu_x^{N_R}(y)\,,
\end{eqnarray}
where $\mu_x^{N_R}$ denotes the spectral measure of~$N_R$ associated with $x$; the last equality in (\ref{real}) is a consequence from fact that $(e^{tN})_{t \geq 0}$ is a semigroup of contractions. Thus, at least when $\mu_x^{N_R}$ has a certain local regularity (with respect to the Lebesgue measure), we expect that   
\[\Vert e^{tN}x\Vert_{{\mathcal{H}}}^2 = \int_{-\infty}^0  e^{2ty} d\mu_x^{N_R}(y) ~\sim~ \mu_x^{N_R}(B(0,{1/t}))\,.\] 

If $f$ and $g$ are two real-value functions, $f\sim g$ means that $f$ and $g$ are asymptotically equivalent, that is, 
\[\lim_{t \to \infty}\frac{f(t)}{g(t)} =1\,.\]

In order to present our next result, we recall the following definition.

\begin{definition}{\rm Let $\mu$ be  a finite (positive) Borel measure on $\mathbb{R}$. The pointwise lower and upper scaling exponents of $\mu$  at $w \in \mathbb{R}$ are defined, respectively, by  
\[d_\mu^-(w) := \liminf_{\epsilon \downarrow 0} \frac{\ln \mu (B(w,\epsilon))}{\ln \epsilon} \quad{\rm  and }\quad d_\mu^+(w) := \limsup_{\epsilon \downarrow 0} \frac{\ln \mu (B(w,\epsilon))}{\ln \epsilon}\,,\]
if, for all small enough $\epsilon>0$,  $\mu(B(w,\epsilon))> 0$; $d_\mu^{\mp}(w) := \infty$\,, otherwise.}
\end{definition}

Taking into account (\ref{real}), the following result is expected. 

\begin{proposition}\label{decpolproposition} Let~$N$  be a normal  operator so that ${\mathbb{C}}_+ \subset \varrho(N)$, and let $x \in \mathcal{H}$, with $x \not = 0$. Then,
\begin{equation*}
d_{\mu_x^{N_R}}^+(0) = -\liminf_{t \to \infty} \frac{\ln  \|e^{tN}x\|_{{\mathcal{H}}}^2}{\ln t}  \quad\textrm{and}\quad   d_{\mu_x^{N_R}}^-(0) = -  \limsup_{t \to \infty} \frac{\ln \|e^{tN}x\|_{{\mathcal{H}}}^2}{\ln  t} \,.
\end{equation*}
\end{proposition}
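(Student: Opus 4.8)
The plan is to reduce everything to the Laplace-type integral furnished by identity~(\ref{real}) and to match its large-$t$ decay against the scaling of the spectral measure at the origin. Write $\mu := \mu_x^{N_R}$, a finite positive Borel measure carried by $(-\infty,0]$ (since ${\mathbb{C}}_+\subset\varrho(N)$ forces $\sigma(N_R)\subset(-\infty,0]$) of total mass $\|x\|_{\mathcal{H}}^2$. By~(\ref{real}),
\[ F(t) := \|e^{tN}x\|_{\mathcal{H}}^2 = \int_{-\infty}^0 e^{2ty}\,d\mu(y), \qquad t\ge 0. \]
Putting $g(\epsilon):=\mu(B(0,\epsilon))$, the two claimed identities are exactly $-\liminf_{t\to\infty}\tfrac{\ln F(t)}{\ln t}=\limsup_{\epsilon\downarrow0}\tfrac{\ln g(\epsilon)}{\ln\epsilon}=d_\mu^+(0)$ and $-\limsup_{t\to\infty}\tfrac{\ln F(t)}{\ln t}=\liminf_{\epsilon\downarrow0}\tfrac{\ln g(\epsilon)}{\ln\epsilon}=d_\mu^-(0)$. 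Thus it suffices to bound $\ln F(t)/\ln t$ above and below by the scaling quotient of $\mu$ evaluated at $\epsilon = 1/t$, all read in the extended reals $[-\infty,+\infty]$. I shall use that, as $\mu$ lives on $(-\infty,0]$, one has $g(\epsilon)=\mu((-\epsilon,0])$, and that replacing this by the closed-ball mass $\mu([-\epsilon,0])$ alters neither $\liminf$ nor $\limsup$ of the quotient (a squeeze via monotonicity of $g$ and $\ln(2\epsilon)/\ln\epsilon\to1$).

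For the lower bound on $F$, I restrict the integral to $(-1/t,0]$ and bound $e^{2ty}\ge e^{-2}$ there, which gives the elementary inequality $F(t)\ge e^{-2}\,g(1/t)$. Taking logarithms, dividing by $\ln t>0$, and using $\ln(1/t)=-\ln t$ converts $\ln g(1/t)/\ln t$ into $-\ln g(\epsilon)/\ln\epsilon$ with $\epsilon=1/t$. Passing to $\liminf$ and to $\limsup$ as $t\to\infty$ (equivalently $\epsilon\downarrow0$, since $t\mapsto1/t$ is a decreasing reparametrization) yields $-\liminf_t \tfrac{\ln F(t)}{\ln t}\le d_\mu^+(0)$ and $-\limsup_t\tfrac{\ln F(t)}{\ln t}\le d_\mu^-(0)$.

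For the upper bound on $F$, I split the integral at $-\epsilon$, for every $\epsilon>0$:
\[ F(t)\le \mu([-\epsilon,0]) + e^{-2t\epsilon}\,\|x\|_{\mathcal{H}}^2. \]
Then I fix $\theta\in(0,1)$ and specialize to the curve $\epsilon=\epsilon(t)=t^{\theta-1}$, so that $t\epsilon=t^{\theta}\to\infty$ and the second term is $e^{-2t^{\theta}}\|x\|_{\mathcal{H}}^2$, which is super-polynomially small; hence after applying $\ln(\cdot)/\ln t$ its contribution tends to $-\infty$. Using $\ln\epsilon(t)=(\theta-1)\ln t$, the first term contributes $(\theta-1)\,\ln\mu([-\epsilon(t),0])/\ln\epsilon(t)$. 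Since $\epsilon(t)$ is a continuous decreasing bijection of $(1,\infty)$ onto $(0,1)$ and the exact negative constant $\theta-1$ factors cleanly out of $\liminf$ and $\limsup$, taking limits gives $-\liminf_t\tfrac{\ln F(t)}{\ln t}\ge(1-\theta)\,d_\mu^+(0)$ and $-\limsup_t\tfrac{\ln F(t)}{\ln t}\ge(1-\theta)\,d_\mu^-(0)$; letting $\theta\downarrow0$ removes the factor, and together with the previous paragraph this proves both identities.

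The routine but slightly delicate point—the one I expect to be the main obstacle—is this last step: absorbing the super-polynomially small exponential term through the $\max$ of the two logarithmic contributions, and justifying that the constant $\theta-1$ may be pulled out of $\liminf/\limsup$, with all quantities allowed to equal $\pm\infty$. Carried out in the extended reals, this simultaneously settles the degenerate cases: if $\mu$ charges no neighbourhood of $0$ (so $d_\mu^\pm(0)=\infty$) then $F$ decays exponentially and $\ln F(t)/\ln t\to-\infty$, while if $0$ is an atom of $\mu$ then $F(t)\to\mu(\{0\})>0$ and both sides equal $0$; in each case the two displayed identities hold with the stated extended-real values.
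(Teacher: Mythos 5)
Your proof is correct and follows essentially the same route as the paper's: both reduce to the Laplace-type integral \eqref{real}, get the lower bound by restricting the integral to $B(0,1/t)$ where $e^{2ty}\ge e^{-2}$, and get the upper bound by splitting the integral at radius $t^{\theta-1}$ (the paper's $1/t^{1-\delta}$), noting the tail $e^{-2t^{\theta}}\mu(\mathbb{R})$ is super-polynomially small, and finally letting $\theta\downarrow 0$. The only divergence is bookkeeping: where the paper compares the ball mass against the exponential tail via a case analysis on whether $d_{\mu}^{-}$ is finite, producing witness sequences along which the integral is at most $2\mu\bigl(B(0,t^{\theta-1})\bigr)$, you absorb the tail through $\ln(A+B)\le \ln 2+\max\{\ln A,\ln B\}$ in the extended reals---a streamlined rendering of the same step, whose liminf-of-max subtlety you correctly flag and which does go through.
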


We note that Proposition \ref{decpolproposition} relates, for every $x\in\mathcal{H}$, the polynomial decaying rates of $\|e^{tN}x\|_{{\mathcal{H}}}$ to dimensional properties of  the spectral measure $\mu_x^{N_R}$. Namely, this result establishes an explicit relation between the dynamics of the semigroup and the local scale spectral properties of its generator. 

We also note that Proposition \ref{decpolproposition} indicates that the polynomial decaying rates of an orbit $(e^{tN}x)_{t \geq 0}$ may depend on sequences of time going to infinity; by Proposition \ref{decpolproposition}, this will occur if $d_{\mu_x^{N_R}}^-(0) < d_{\mu_x^{N_R}}^+(0)$. We will show (Corollary \ref{cortheoremnormal}) that if $(e^{tN})_{t \geq 0}$ is stable but not exponentially stable, then, Baire generically in~$\mathcal{H}$,  
\[d_{\mu_x^{N_R}}^-(0) = 0 {\rm ~~~~and~~~~ } d_{\mu_x^{N_R}}^+(0) = \infty.\]
\begin{proof} [{Proof} {\rm (Proposition~\ref{decpolproposition})}] Let $\mu$ be a finite (positive) Borel measure on $\mathbb{R}$. We show that, for each $w \in {\mathbb{R}}$,
\begin{equation}\label{exploc1}
\liminf_{t \to \infty} \frac{\ln [ \int_{\mathbb{R}} e^{-2t|w-y|} d\mu(y)]}{\ln t} = -d_\mu^+(w),
\end{equation}
\begin{equation}\label{exploc2}
\limsup_{t \to \infty} \frac{\ln [ \int_{\mathbb{R}} e^{-2t|w-y|} d\mu(y)]}{\ln  t} = -d_\mu^-(w);
\end{equation}
so, Proposition \ref{decpolproposition}  becomes a direct consequence of~(\ref{real}).

Fix $w\in\mathbb{R}$. If there exists $\epsilon > 0$ such that $\mu(B(w,\epsilon)) = 0$, then, for each $t \geq 0$, 
\[ \int_{\mathbb{R}} e^{-2t|w-y|} d\mu(y) = \int_{B(w,\epsilon)^c} e^{-2t|w-y|} d\mu(y) \leq  \mu(\mathbb{R}) e^{-2t\epsilon};\]
 thus, 
\[\liminf_{t \to \infty} \frac{\ln[ \int_{\mathbb{R}} e^{-2t|w-y|} d\mu(y)]}{\ln t} = \limsup_{t \to \infty} \frac{\ln [\int_{\mathbb{R}} e^{-2t|w-y|} d\mu(y)]}{\ln t} = - \infty\]
 and the result follows. Suppose that, for each $\epsilon > 0$, $\mu(B(w,\epsilon)) > 0$. Since
\[ \int_{\mathbb{R}} e^{-2t|w-y|} d\mu(y)  \geq \int_{B(w,{1/t})} e^{-2t|w-y|} d\mu(y)  \geq e^{-2} \mu(B(w,{1/t})),\] 
it follows that
\[\liminf_{t \to \infty} \frac{\ln [ \int_{\mathbb{R}} e^{-2t|w-y|} d\mu(y)]}{\ln t} \geq - \limsup_{t \to \infty} \frac{\ln \mu(B(w,{1/t}))}{\ln t^{-1}} = -d_{\mu}^+(w)\]
 and that
\[\limsup_{t\to \infty} \frac{\ln [ \int_{\mathbb{R}} e^{-2t|w-y|} d\mu(y)]}{\ln t} \geq - \liminf_{t \to \infty} \frac{\ln \mu(B(w,{1/t}))}{\ln t^{-1}} = -d_{\mu}^-(w).\]

Now, let $0 < \delta < 1$. Then, for each $t>0$,
\begin{eqnarray}\label{exploc3}
\nonumber \!\!\! \int_{\mathbb{R}} e^{-2t|w-y|} d\mu(y) &=& \int_{B(w,\frac{1}{t^{1-\delta}})}  e^{-2t|w-y|} d\mu(y) +   \int_{B(w,\frac{1}{t^{1-\delta}})^c}  e^{-2t|w-y|} d\mu(y)\\
&\leq&  \mu\big(B\big(w,{1/t^{1-\delta}})) +   e^{-t^\delta} \mu(\mathbb{R}).
\end{eqnarray} 

Given that it is not possible to compare directly the  two terms on the right-hand side of~\eqref{exploc3}, one needs to analyze two distinct cases. 

\textbf{Case} $d_{\mu}^-(w) < \infty$: 
One has, from the definition of $d_{\mu}^-(w)$, that 
\[\liminf_{t \to \infty} \frac{\ln \mu(B(w,{1/t^{1-\delta}})) }{ \ln t^{-(1-\delta)}} = d_{\mu}^-(w) < \max \{2d_{\mu}^-(w),1\}=:\gamma \] 
($\gamma$ can be defined as any positive number greater than $d_{\mu}^-(w)$), so
\[\limsup_{t \to \infty} \frac{\ln \mu(B(w,{1/t^{1-\delta}}))) }{ \ln t^{1-\delta}} > - \gamma. \]
Hence, there exists a sequence $(t_k)$, with $\displaystyle\lim_{k \to \infty} t_k = \infty$, such that, for  sufficiently large~$k$,
\begin{equation}\label{exploc4}
\mu(B(w,{1/t_k^{1-\delta}})) \geq  t_k^{-\gamma(1-\delta)}  \geq e^{-t_k^\delta} \mu(\mathbb{R}).
\end{equation}
Now, combining (\ref{exploc3}) and (\ref{exploc4}) one has, for sufficiently large $k$,
\[\int_{\mathbb{R}} e^{-2t_k|w-y|} d\mu(y) \leq 2\, \mu\big(B\big(w,{1/t_k^{1-\delta}}\big)\big),\]
which results in 
\[\frac{1}{(1-\delta)} \liminf_{t \to \infty} \frac{\ln [ \int_{\mathbb{R}} e^{-2t|w-y|} d\mu(y)]}{\ln t} \leq - \limsup_{t\to \infty} \frac{\ln \mu(B(w,{1/t^{1-\delta}})))}{\ln t^{-(1-\delta)}} = -d_{\mu}^+(w).\]
Since $0 < \delta < 1$ is arbitrary, the complementary inequality in~\eqref{exploc1} follows.

It remains to prove the complementary inequality in~\eqref{exploc2}. This is trivial if $d_{\mu}^-(w)=0$, since $\displaystyle\limsup_{t \to \infty}$$ \ \ln [\int_{\mathbb{R}} e^{-2t|w-y|} d\mu(y)]/\ln t \le0$. So, let $d_{\mu}^-(w)>0$; it follows from the definition of $d_{\mu}^-(w)$ that, for each $0<\epsilon<d_{\mu}^-(w)$, there exists $t_{\delta, \epsilon}>0$ such that, for  $t > t_{\delta, \epsilon}$, 
\begin{equation}\label{exploc5}
\mu\big(B(w,{1/t^{1-\delta}}))\big) \leq  t^{-(1-\delta) (d_{\mu}^-(w)-\epsilon)}.
\end{equation}
 Combining (\ref{exploc3}) with (\ref{exploc5}), one gets, for sufficiently large $t$,
\[\int_{\mathbb{R}} e^{-2t|w-y|} d\mu(y) \leq 2 t^{-(1-\delta) (d_{\mu}^-(w)-\epsilon)}. \] Thus, 
\begin{equation*}
\limsup_{t\to \infty} \frac{\ln[\int_{\mathbb{R}} e^{-2t|w-y|} d\mu(y)]}{\ln t} \leq  -(1-\delta) (d_{\mu}^-(w)-\epsilon), 
\end{equation*}
and since $0<\delta <1$ and $0 < \epsilon < d_{\mu}^-(w)$ are arbitrary, the result follows.

\textbf{Case} $d_{\mu}^-(w) = \infty$:
One has
\[\lim_{t \to \infty} \frac{\ln \mu(B(w,{1/t}))}{\ln t}  = -\infty\,;\]
given an arbitrary $\alpha>0$, there is $t_\alpha>0$ so that, for each $t > t_\alpha$, $\mu(B(w,{1/t})) \leq t^{-2\alpha}$. Combining this inequality with~\eqref{exploc3} (taking $\delta = \frac{1}{2}$), one obtains, for sufficiently large $t$, 
\[ \int_{\mathbb{R}} e^{-2t|w-y|} d\mu(y) \leq  \mu\big(B\big(w,\frac{1}{t^{1/2}}\big)\big) +   e^{-t^{1/2}} \mu(\mathbb{R})   \leq 2 t^{-\alpha},\]
 from which follows that 
\[ \liminf_{t \to \infty} \frac{\ln [\int_{\mathbb{R}} e^{-2t|w-y|} d\mu(y)]}{\ln t}\le -\alpha\]
and that
\[ \limsup_{t \to \infty} \frac{\ln[\int_{\mathbb{R}} e^{-2t|w-y|} d\mu(y)]]}{\ln t}\le -\alpha;\] 
since $\alpha>0$ is arbitrary, the result follows. 
\end{proof}


\subsection{Proof of Theorem \ref{maintheoremnormal}}

\noindent Next, we present a proof of Theorem \ref{maintheoremnormal}. The main ingredient of this proof is the relation, given by the Spectral Theorem, between the decaying rates of the semigroup $(e^{tN})_{t\geq 0}$ and the local scale properties of the corresponding spectral measures of $N_R$. However, some preparation is required. 

We recall that a $C_0$-semigroup $(T(t))_{t \geq 0}$ is weakly stable if it converges to zero as $t \to \infty$ in the weak operator topology.

\begin{theorem}[Theorem 5.3 in~\cite{Muller}] \label{theoMuller1}Let $(T(t))_{t\geq 0}$ be a weakly stable $C_0$-semigroup on a Hilbert space~$\mathcal{H}$, with generator~$A$, such that $\omega_0(T) = 0$. Let $g: {\mathbb{R}}_+ \longrightarrow (0,\infty)$ be a bounded function such that $\displaystyle\lim_{t \to \infty} g(t) = 0$ and let $\epsilon>0$. Then, there exists $x_0 \in \mathcal{H}$  so that $\Vert x_0 \Vert_{\mathcal{H}} < \displaystyle\sup_{t \geq 0} \{g(t)\} + \epsilon$ and
\[\vert \langle T(t)x_0,x_0 \rangle \vert > g(t), \quad \forall t \geq 0.\]
\end{theorem}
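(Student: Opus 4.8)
The plan is to construct $x_0$ by an inductive ``gliding hump'' procedure, writing $x_0=\sum_{k\ge1}\lambda_k v_k$ for suitably chosen positive scalars $\lambda_k$ and unit vectors $v_k$, so that on a sequence of expanding time windows a single diagonal term of the quadratic form $\langle T(t)x_0,x_0\rangle$ dominates all the others and keeps its modulus above $g(t)$. First I would record the two structural consequences of the hypotheses that drive everything. Weak stability forces $\sup_{t\ge0}\|T(t)\|_{\mathcal B(\mathcal H)}=:M<\infty$ (apply the uniform boundedness principle twice to the convergent, hence bounded, scalar orbits $t\mapsto\langle T(t)x,y\rangle$), and it rules out eigenvalues of $A$ on $i\mathbb R$ (an eigenvector for $i\xi$ would give $|\langle T(t)v,v\rangle|=\|v\|_{\mathcal H}^2$ for all $t$). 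On the other hand $\omega_0(T)=0$ means the semigroup is not exponentially stable, so by the Gearhart--Pr\"{u}ss theorem either $i\mathbb R$ meets $\sigma(A)$ or the resolvent blows up along $i\mathbb R$; in either case, combined with the absence of boundary eigenvalues, one extracts a Weyl sequence on the imaginary axis: unit vectors $w_n$ and reals $\xi_n$ with $\delta_n:=\|(A-i\xi_n)w_n\|_{\mathcal H}\to0$ and, crucially, $w_n\rightharpoonup0$ weakly (any weak limit of $(w_n)$ would be an eigenvector for a point of $i\mathbb R$, hence $0$, by a closed--graph argument that handles both bounded and unbounded $\xi_n$).

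These vectors furnish the building block. From $\frac{d}{dt}\bigl(e^{-i\xi_n t}T(t)w_n\bigr)=e^{-i\xi_n t}T(t)(A-i\xi_n)w_n$ one gets $\|T(t)w_n-e^{i\xi_n t}w_n\|_{\mathcal H}\le M\delta_n t$, whence $|\langle T(t)w_n,w_n\rangle|\ge1-M\delta_n t$. Thus, taking $n$ large (so $\delta_n$ small), the normalised block $v_k:=w_{n_k}$ satisfies $|\langle T(t)v_k,v_k\rangle|\ge1-\theta$ on an arbitrarily long initial window, while the weak convergence $w_{n_k}\rightharpoonup0$ makes the cross--correlations $\langle T(t)v_k,v_j\rangle=\langle v_k,T(t)^*v_j\rangle$ with the finitely many already--chosen $v_j$ ($j<k$) uniformly small on any fixed compact time set (the adjoint semigroup is again $C_0$ on the Hilbert space, so $t\mapsto T(t)^*v_j$ traces a compact curve, and weak convergence is uniform on compacta).

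Now comes the inductive bookkeeping, organised around the observation that the norm budget $\|x_0\|_{\mathcal H}^2\approx\sum_k\lambda_k^2<\sup_{t\ge0}g(t)+\epsilon$ is essentially spent on the first block. On the first window I would take $\lambda_1^2$ slightly above $\sup g$, so that $\lambda_1^2|\langle T(t)v_1,v_1\rangle|$ already exceeds $g(t)$ while $v_1$ is active; every later window is pushed to start at a time $\tau_{k-1}$ so large that $\sup_{t\ge\tau_{k-1}}g(t)$ is tiny (possible since $g\to0$), whence the subsequent amplitudes $\lambda_k^2$ needed to beat $g$ there are summably small and keep the total norm below $\sup g+\epsilon$. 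At the $k$-th step I would choose $\tau_{k-1}$ large enough (using weak stability of the already fixed finitely many correlations) and the accuracy $\delta_{n_k}$ small enough that, for $t$ in the current window, the $k$-th diagonal term dominates: future terms are controlled by the smallness of $\sum_{j>k}\lambda_j$, the ``old'' terms by weak decorrelation past $\tau_{k-1}$, and the freshly introduced cross terms by $w_{n_k}\rightharpoonup0$. One then checks that the approximately orthonormal family yields $\|x_0\|_{\mathcal H}^2=\sum_k\lambda_k^2+o(1)<\sup g+\epsilon$ and assembles the bound $|\langle T(t)x_0,x_0\rangle|>g(t)$ window by window.

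The main obstacle is precisely this dominance/handover analysis. An approximate eigenvector stays correlated with itself for a time comparable to $1/\delta_{n_k}$ and does \emph{not} decorrelate sharply at the end of its window, so consecutive blocks are simultaneously active and their phasors $e^{i\xi_{n_k}t}$ may interfere destructively during the transitions. Making the windows, the accuracies $\delta_{n_k}$, the starting times $\tau_k$, and the amplitudes $\lambda_k$ fit together so that the modulus of the quadratic form never dips below $g(t)$ at a handover, while respecting the tight budget forced by Cauchy--Schwarz at $t=0$ (which pins $\|x_0\|_{\mathcal H}^2$ near $\sup g$), is the delicate heart of the construction; it is exactly here that the quantitative interplay between $\omega_0(T)=0$ (subexponential persistence of each block) and weak stability (eventual decorrelation, allowing the budget to be reused) must be balanced.
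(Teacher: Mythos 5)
You are attempting to reprove a statement that the paper itself does not prove: Theorem \ref{theoMuller1} is quoted verbatim from M\"uller and Tomilov \cite{Muller} and used as a black box (in Lemma \ref{lemmanormal} and in the proof of Theorem \ref{maintheoremrefineddecay}), so there is no internal proof to compare against. Judged on its own terms, your preparation is sound: the uniform boundedness argument giving $M:=\sup_{t\ge0}\|T(t)\|_{\mathcal{B}(\mathcal{H})}<\infty$, the exclusion of imaginary eigenvalues by weak stability, the extraction via Gearhart--Pr\"uss and $\omega_0(T)=0$ of a Weyl sequence $\delta_n=\|(A-i\xi_n)w_n\|_{\mathcal{H}}\to0$ with $w_n\rightharpoonup0$ (your handling of bounded versus unbounded $\xi_n$ is correct after passing to subsequences), the estimate $\|T(t)w_n-e^{i\xi_n t}w_n\|_{\mathcal{H}}\le M\delta_n t$, and the fact that weak convergence is uniform on norm-compact sets, which controls cross terms against finitely many fixed vectors on bounded time windows.

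The genuine gap is the one you flag yourself and then leave open: the handover analysis is not ``delicate bookkeeping'' but the actual content of the theorem, and your scheme as described cannot close it. After block $k$'s guaranteed window $[0,\tau_k]$ expires, continuity forces $|\langle T(t)v_k,v_k\rangle|$ to pass through all intermediate values between $\approx1$ and the smallness that weak stability supplies only at some later time $\sigma_k$ \emph{depending on} $v_k$; on the twilight interval $(\tau_k,\sigma_k)$ the term $\lambda_k^2\langle T(t)v_k,v_k\rangle$ has modulus as large as $M\lambda_k^2$ and uncontrolled phase, while the freshest guaranteed diagonal contributes only about $\lambda_{k+1}^2(1-\theta)$ with $\lambda_{k+1}^2\approx\sup_{t\ge\tau_k}g(t)\ll\lambda_k^2$, since the norm budget $\sum_j\lambda_j^2\approx\sup g$ forbids $\lambda_{k+1}\gtrsim\lambda_k$. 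Hence no single term dominates on that interval, and nothing prevents $\langle T(t)x_0,x_0\rangle$ from dipping below $g(t)$, or through zero, there; nor can you choose $\tau_k$ after learning $\sigma_k$, because $\sigma_k$ depends on $v_k$ whose accuracy $\delta_{n_k}$ already fixed $\tau_k$ --- the circularity you gesture at is real, and with possibly only one admissible frequency $\xi$ available there is no phase freedom to dodge cancellation. So the proposal is a plausible opening, not a proof; the cited source's argument does its real work exactly at this point, by a mechanism your sketch does not supply. A secondary observation, which your own $t=0$ remark makes: Cauchy--Schwarz forces $\|x_0\|_{\mathcal{H}}^2>g(0)$ for any vector satisfying the conclusion, so when $g(0)=\sup_{t\ge0}g(t)<1$ the bound $\|x_0\|_{\mathcal{H}}<\sup g+\epsilon$ as displayed in the statement is unattainable for small $\epsilon$; the consistent bound is on $\|x_0\|_{\mathcal{H}}^2$ (equivalently, of square-root type), and that is what your budget $\sum_k\lambda_k^2<\sup g+\epsilon$ actually targets --- worth noting explicitly rather than claiming the literal bound.
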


\begin{lemma}\label{lemmanormal}Let $A$ be a negative self-adjoint operator so that  $0 \in \sigma(A)$, and let  $\alpha:{\mathbb{R}}_+ \longrightarrow (0,\infty)$ such that
\[\lim_{t \to \infty} \alpha(t) = \infty.\]
Then, there exist $x \in \mathcal{H}$ and a sequence $t_j \rightarrow \infty$ such that, for sufficiently large~$j$,  
\[\mu_{x}^A\big(B(0,{1/t_j})\big) \geq \frac{1}{\alpha(t_j)}.\]
\end{lemma}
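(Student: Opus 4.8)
The plan is to prove the slightly stronger assertion that there is a single $x\in\mathcal H$ with $\limsup_{t\to\infty}\alpha(t)\,\mu_x^A(B(0,1/t))=\infty$; from such an $x$ one immediately extracts a sequence $t_j\to\infty$ along which $\mu_x^A(B(0,1/t_j))\ge 1/\alpha(t_j)$, which is the claim. Rather than invoke Theorem~\ref{theoMuller1}, I would construct $x$ directly from the spectral resolution of $A$, distinguishing according to the nature of the point $0\in\sigma(A)$. The trivial case is when $0$ is an eigenvalue: taking a unit eigenvector $x$ gives $\mu_x^A=\delta_0$, so $\mu_x^A(B(0,1/t))=1\ge 1/\alpha(t)$ for all large $t$ (since $\alpha(t)\to\infty$), and any $t_j\to\infty$ works.

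The main case is $0\in\sigma(A)$ \emph{not} an eigenvalue. Since $A$ is self-adjoint with $\sigma(A)\subset(-\infty,0]$, an isolated point of the spectrum would be an eigenvalue, so here $0$ is an accumulation point of $\sigma(A)$ from the left. I would then build inductively a strictly decreasing sequence $r_1>r_2>\cdots\downarrow 0$ such that each shell $S_n:=(-r_n,-r_{n+1}]$ carries spectral mass: given $r_n$, accumulation furnishes $\lambda\in\sigma(A)\cap(-r_n,0)$, and setting $r_{n+1}:=-\lambda/2$ places $\lambda$ in the open interval $(-r_n,-r_{n+1})$, so $E^A(S_n)\neq 0$. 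As the $S_n$ are pairwise disjoint, the ranges of the $E^A(S_n)$ are mutually orthogonal, and choosing unit vectors $\psi_n\in\operatorname{ran}E^A(S_n)$ produces an orthonormal system localized at scale $r_n$ near $0$, with $\mu_{\psi_n}^A$ supported in $S_n$.

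Now set $x=\sum_n c_n\psi_n$ with $c_n>0$ and $\sum_n c_n^2<\infty$. Because the $\psi_n$ lie in orthogonal spectral subspaces the cross terms vanish, giving $\mu_x^A=\sum_n c_n^2\,\mu_{\psi_n}^A$; evaluating on $B(0,r_m)=(-r_m,r_m)$ and using $\operatorname{supp}\mu_x^A\subset(-\infty,0]$ together with $S_n\subset(-r_m,0]\iff n\ge m$ yields the clean identity $\mu_x^A(B(0,r_m))=\sum_{n\ge m}c_n^2$. To finish, put $\beta_j:=1/\alpha(1/r_j)$, which tends to $0$ because $1/r_j\to\infty$, and prescribe the tail sums by $\sum_{n\ge j}c_n^2:=\sup_{k\ge j}\beta_k+2^{-j}$; this is a positive, non-increasing, summable sequence tending to $0$ and dominating $\beta_j$, so the $c_n^2$ are its non-negative successive differences. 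Taking $t_j:=1/r_j\to\infty$ then gives $\mu_x^A(B(0,1/t_j))=\sum_{n\ge j}c_n^2\ge\beta_j=1/\alpha(t_j)$ for every $j$, as required.

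The only genuinely delicate step is the spectral-geometric construction in the main case, namely manufacturing pairwise disjoint shells that each carry spectral mass; this is precisely where the hypotheses $0\in\sigma(A)$ and $0$ not an eigenvalue (forcing accumulation) are used. Once the orthonormal family $\{\psi_n\}$ is available, the identity for $\mu_x^A(B(0,r_m))$ and the selection of the $c_n$ are routine bookkeeping. I note that one could instead deduce the existence of a suitable $x$ from Theorem~\ref{theoMuller1}, applied to the weakly stable self-adjoint semigroup $(e^{tA})_{t\ge 0}$ with $\omega_0=0$, by converting a lower bound on $\langle e^{tA}x_0,x_0\rangle=\int_{-\infty}^0 e^{ty}\,d\mu_{x_0}^A(y)$ into a lower bound on $\mu_{x_0}^A(B(0,1/t))$ along a sequence; however, this forces a comparison of the Laplace-type integral with the measure of small balls and is less transparent than the direct construction above.
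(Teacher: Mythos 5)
Your proof is correct, and it takes a genuinely different route from the paper's. The paper deduces Lemma~\ref{lemmanormal} from the M\"uller--Tomilov theorem (Theorem~\ref{theoMuller1}): it notes via Gearhart--Pr\"uss that the growth bound is $0$, picks an auxiliary function $g$ with $\sup g<1$ and $1/(g(t)\alpha(\sqrt{t}))\to 0$, applies Theorem~\ref{theoMuller1} to $\sqrt{g}$ to obtain $x$ with $g(t)\le\Vert e^{tA}x\Vert_{\mathcal H}^2\le\mu_x^A(B(0,1/\sqrt{t}))+e^{-\sqrt{t}}$, and concludes by contradiction --- precisely the Laplace-transform-versus-small-balls comparison that you mention and set aside in your last paragraph. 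You instead build $x$ and the sequence $(t_j)$ by hand from the spectral resolution, and every step checks out: in your main case $0$ is not an eigenvalue, hence not isolated in $\sigma(A)$ (isolated spectral points of self-adjoint operators are eigenvalues), so the shells $S_n=(-r_n,-r_{n+1}]$ exist and carry nonzero spectral projections; the unit vectors $\psi_n\in\operatorname{ran}E^A(S_n)$ are orthonormal and the cross terms vanish since $E^A(S_n)E^A(S_m)=0$ for $n\neq m$, giving $\mu_x^A(B(0,r_m))=\sum_{n\ge m}c_n^2$; and your tail prescription $\sum_{n\ge j}c_n^2=\sup_{k\ge j}\beta_k+2^{-j}$ is positive term-by-term, summable, and dominates $\beta_j$, so $\mu_x^A(B(0,1/t_j))\ge1/\alpha(t_j)$ with $t_j=1/r_j$, in fact for \emph{every} $j$. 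What your approach buys: it is elementary and self-contained (only the spectral theorem), and it cleanly covers the eigenvalue case, where the paper's appeal to Theorem~\ref{theoMuller1} is not literally available (that theorem requires weak stability, which fails when $0$ is an eigenvalue; the lemma is of course trivial there, and in the paper's application stability is assumed). What the paper's approach buys: the M\"uller--Tomilov machinery does not depend on self-adjointness and is reused verbatim in the proof of Theorem~\ref{maintheoremrefineddecay}, so the paper runs both density arguments off a single tool and emphasizes the link between large weak orbits and decay rates. One small expository correction: the ``slightly stronger assertion'' you announce at the outset, $\limsup_{t\to\infty}\alpha(t)\,\mu_x^A(B(0,1/t))=\infty$, is not what your construction delivers --- along $(t_j)$ you only get $\alpha(t_j)\,\mu_x^A(B(0,1/t_j))\ge 1$, hence $\limsup\ge1$ --- but this is harmless, since the construction proves the lemma as stated directly, without passing through that claim.
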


\begin{proof} Since $0 \in \sigma(A)$, it follows from Gearhart-Pr\"{u}ss Theorem that $\omega_0(A) = 0$. We also note that it is sufficient to prove the case in which there exists a sequence $s_j \rightarrow \infty$ such that $\alpha(\sqrt{s_j}) \leq e^{\sqrt{s_j}}$, for sufficiently large~$j$. Set $g:{\mathbb{R}}_+  \longrightarrow (0,\infty)$ such that it satisfies the hypotheses of Theorem \ref{theoMuller1}, $\displaystyle\sup_{t\geq 0}\{g(t)\}<1$ and 
\begin{eqnarray}\label{lemm1}
\lim_{t \to \infty} \frac{1}{g(t)\alpha(\sqrt{t})} = 0.
\end{eqnarray}
Since $\sqrt{g}$ also satisfies the hypotheses of Theorem \ref{theoMuller1}, there exists $x \in {\mathcal{H}}$, $\Vert x \Vert_{\mathcal{H}} \leq 1$, such that for every $t >0$,
\begin{eqnarray}\label{lemm2}
\nonumber g(t) \leq \Vert e^{tA}x \Vert_{\mathcal{H}}^2 &=& \int_{\mathbb{R}} e^{2ty} d\mu_x^A(y)\\ \nonumber &=& \int_{B(0,\frac{1}{\sqrt{t}})}  e^{2ty} d\mu_x^A(y) +   \int_{B(0,\frac{1}{\sqrt{t}})^c}  e^{2ty} d\mu_x^A(y)\\
&\leq&  \mu_x^A\big(B\big(0,\frac{1}{\sqrt{t}}\big)\big) +   e^{-\sqrt{t}}.
\end{eqnarray} 

Now, if there does not exist a sequence $t_j \rightarrow \infty$ so that, for large enough~$j$,  
\[\mu_x^A(B(0,1/t_j)) \geq \frac{1}{\alpha(t_j)},\]
then, by (\ref{lemm2}), for large enough~$t$,
\[g(t) \leq \frac{1}{\alpha(\sqrt{t})} +   e^{-\sqrt{t}},\]
which implies, for large enough~$j$, 
\[\frac{2}{g(s_j) \alpha(\sqrt{s_j})} \geq 1;\]
since this contradicts (\ref{lemm1}), the result is proven. 
\end{proof}

\begin{proof}[{Proof} $(${\rm Theorem~\ref{maintheoremnormal}}$)$] Since $\sup \{R (\lambda): \lambda \in \sigma(N)\} = 0$ and $\sigma(N_R) \subset {\mathbb{R}}_-$ is closed, one has $0 \in \sigma(N_R)$. Therefore, by (\ref{real}), one can assume without loss of generality that $N$ is a self-adjoint operator such that $0 \in \sigma(N) \subset {\mathbb{R}}_-$. 

Since, for each $t \geq 0$, the mapping 
\[ {\mathcal{H}} \ni x\; \longmapsto \;\alpha(t)\, \Vert e^{tN}x \Vert_{{\mathcal{H}}} \]
is continuous, one has that 
\begin{eqnarray*}
{\mathcal{G}}_N(\alpha)&:=& \{x  \mid  \limsup_{t \to \infty} \alpha(t)  \Vert e^{tN}x\Vert_{\mathcal{H}} = \infty\}\\ &=& \bigcap_{n\geq 1} \bigcap_{k \geq 1} \bigcup_{t\geq k} \{x \mid  \alpha(t)  \Vert e^{tN}x\Vert_{\mathcal{H}} > n \}
\end{eqnarray*}
is a $G_\delta$ set in ${\mathcal{H}}$. The proof that 
\[{\mathcal{G}}_N(\beta):= \{x  \mid  \liminf_{t \to \infty} \beta(t)  \Vert e^{tN}x\Vert_{\mathcal{H}} = 0\}\]
is a $G_\delta$ set in ${\mathcal{H}}$ is completely analogous, being, therefore, omitted. 

Let $\overline{x} \in \mathcal{H}$, let $(t_j)$ be the sequence given by Lemma \ref{lemmanormal} and set, for every $x \in \mathcal{H}$ and every $k \geq 1$, 
\[x_k := E^{N}(S_k) x + \frac{1}{k}\overline{x},\]
where $S_k := (-\infty,-1/k) \cup \{0\} \cup (1/k,\infty)$. It is clear that $x_k \rightarrow x$ in ${\mathcal{H}}$.

Now, since $(e^{tN})_{t \geq 0}$ is stable, $E^{N}(\{0\}) = 0$. Namely, it follows from the Spectral Theorem and dominated convergence that, for each $x\in\mathcal{H}$, 
\begin{equation*}
\lim_{t\to \infty}\Vert e^{tN}x\Vert^2=\mu_x^N(\{0\})+\lim_{t\to \infty}\int_{{\mathbb{R}}_-\setminus\{0\}}e^{2ty}d\mu_x^N(y)=\mu_x^N(\{0\});    
\end{equation*}
therefore, $(e^{tN})_{t \geq 0}$  is stable if, and only if, $0$ is not an eigenvalue of $N$, that is, if and only if $E^{N}(\{0\}) = 0$. Hence, for each $k\geq 1$ and each~$j$ such that $\frac{1}{t_j}<\frac{1}{k}$, one has
\begin{eqnarray*}
\mu_{x_k}^N (B(0,\frac{1}{t_j})) &=& \langle E^N(B(0,{1/t_j})) E^{N}(S_k)x, E^{N}(S_k)x \rangle  + \frac{1}{k^2} \langle E^N(B(0,{1/t_j}))\overline{x}, \overline{x} \rangle\\ &=& \langle E^{N}(\{0\})x, E^{N}(S_k)x \rangle  + \frac{1}{k^2} \langle E^N(B(0,{1/t_j}))\overline{x}, \overline{x} \rangle\\ &=& \frac{1}{k^2} \mu_{\overline{x}}^N (B(0,{1/t_j})),
\end{eqnarray*}
from which follows that, for sufficiently large~$j$, 
\begin{eqnarray*}
\alpha(t_j)\Vert e^{t_jN}x_k \Vert_{\mathcal{H}} &\geq& \alpha(t_j) \biggr(\int_{B(0,{1/t_j})}  e^{2t_jy} d\mu_{x_k}^N(y) \biggr)^{1/2}\\ 
&\geq& \frac{\alpha(t_j)}{e}  \biggr(\mu_{x_k}^N (B(0,{1/t_j})) \biggr)^{1/2}\\ &=&  \frac{\alpha(t_j) }{ke} \biggr(\mu_{\overline{x}}^N (B(0,{1/t_j})) \biggr)^{1/2}\\ &\geq& \frac{\sqrt{\alpha(t_j)}}{ke}. 
\end{eqnarray*}
Consequently, for every $k \geq 1$, 
\[\limsup_{t \to \infty} \alpha(t)\Vert e^{tN}x_k \Vert_{\mathcal{H}} = \infty;\]
this proves that ${\mathcal{G}}_N(\alpha)$ is a dense set in~$\mathcal{H}$.

Now we prove that
\[{\mathcal{G}}_N(\beta):= \{x \in  {\mathcal{H}} \mid  \liminf_{t \to \infty} \beta(t)  \Vert e^{tN}x\Vert_{\mathcal{H}} = 0\}\]
is a dense set in~$\mathcal{H}$. Given $x \in \mathcal{H}$, define, for each $k \geq 1$, 
\[x_k := E^{N}(S_k)x.\]
Then, $x_k \rightarrow x$ in ${\mathcal{H}}$. Moreover, since $E^{N}(\{0\}) = 0$, it follows that for each $\epsilon< \frac{1}{k}$, $\mu_{x_k}^N (B(0,\epsilon)) =0$. Thus,
\begin{equation}\label{spt2}
\lim_{t \to \infty}\beta(t)\Vert e^{tN}x_k \Vert_{\mathcal{H}}\; \leq \;\lim_{t \to \infty} \beta(t) e^{-t\epsilon} \Vert x_k \Vert_{\mathcal{H}}= 0. 
\end{equation} 

It remains to prove that the assumption on $\beta$ is optimal. Let $x \in \mathcal{H}$ be such that there exists $\epsilon > 0$ with $\mu_x^N([-\epsilon,0])=0$; then, $\Vert e^{tN}x\Vert_{\mathcal{H}} = O(e^{-t\epsilon})$. Set  $\alpha(t) =t$ and consider ${\mathcal{G}}_N(\alpha)$,  which is a dense $G_\delta$ in~$\mathcal{H}$. Then, for each $x \in {\mathcal{G}}_N(\alpha)$  and each $\epsilon>0$, $\mu_x^N([-\epsilon,0])>0$. It follows from the Spectral Theorem and Jensen's inequality that 
\begin{eqnarray*}
\frac{1}{\Vert e^{tN}x\Vert_{\mathcal{H}}^2} &=& \biggr(\int_{-\infty}^0 e^{2ty} d\mu_x^N(y)\biggr)^{-1}\leq \biggr(\int_{-\epsilon}^0 e^{2ty} d\mu_x^N(y)\biggr)^{-1} \\ &\leq& \frac{1}{(\mu_x^N([-\epsilon,0]))^2} \int_{-\epsilon}^0 e^{-2ty} d\mu_x^N(y) \leq \frac{ e^{2t\epsilon}}{\mu_x^N([-\epsilon,0])}.
\end{eqnarray*}
This shows that, for each $x \in {\mathcal{G}}_N(\alpha)$, $\Vert e^{tN}x\Vert_{\mathcal{H}}$ vanishes slower than exponential as $t \rightarrow \infty$.
\end{proof}

The next result, which is a direct consequence of Proposition \ref{decpolproposition} and Theorem~\ref{maintheoremnormal}, indicates how delicate is the relation between the dynamics of the semigroup and the spectral properties of its generator.
 
\begin{corollary}\label{cortheoremnormal} 
Let $N$ be as in the statement of Theorem~\ref{maintheoremnormal}. Suppose that $(e^{tN})_{t\geq 0}$ is stable. Then, 
\[G_N := \{x \mid d_{\mu_x^{N_R}}^-(0) = 0  \quad and \quad d_{\mu_x^{N_R}}^+(0) = \infty\}\]
is a dense $G_\delta$ set in~$\mathcal{H}$.
\end{corollary}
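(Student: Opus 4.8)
The plan is to translate the dimension-theoretic conditions defining $G_N$ into the $\limsup$/$\liminf$ conditions appearing in Theorem~\ref{maintheoremnormal}, and then to realize $G_N$ as a countable intersection of sets of the form $\mathcal{G}_N(\alpha,\beta)$, each of which is already known to be dense $G_\delta$. First I would invoke Proposition~\ref{decpolproposition}, which rewrites the two scaling exponents at $0$ in purely dynamical terms: $d_{\mu_x^{N_R}}^+(0)=\infty$ is equivalent to $\liminf_{t\to\infty}\ln\|e^{tN}x\|_{\mathcal{H}}^2/\ln t=-\infty$, and $d_{\mu_x^{N_R}}^-(0)=0$ is equivalent to $\limsup_{t\to\infty}\ln\|e^{tN}x\|_{\mathcal{H}}^2/\ln t=0$. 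Here the latter limsup is automatically $\le 0$, since $(e^{tN})_{t\ge0}$ is a contraction semigroup and hence $\ln\|e^{tN}x\|_{\mathcal{H}}^2$ is bounded above; this observation is what makes the value $0$ the correct threshold.

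Next I would express each of these two conditions as a countable intersection of level sets. Setting $\alpha_m(t)=t^{1/m}$ and $\beta_m(t)=t^m$ for $m\ge 1$, I claim
\[\{x\mid d_{\mu_x^{N_R}}^-(0)=0\}=\bigcap_{m\ge1}\{x\mid\limsup_{t\to\infty}\alpha_m(t)\,\|e^{tN}x\|_{\mathcal{H}}=\infty\},\]
\[\{x\mid d_{\mu_x^{N_R}}^+(0)=\infty\}=\bigcap_{m\ge1}\{x\mid\liminf_{t\to\infty}\beta_m(t)\,\|e^{tN}x\|_{\mathcal{H}}=0\}.\]
These equivalences follow from a direct comparison of exponents: for instance, $\liminf_{t\to\infty}\ln\|e^{tN}x\|_{\mathcal{H}}^2/\ln t=-\infty$ means that along arbitrarily large $t$ one has $\|e^{tN}x\|_{\mathcal{H}}\le t^{-m-1}$ for any prescribed $m$, whence $t^m\|e^{tN}x\|_{\mathcal{H}}\to0$ along such a subsequence, and conversely. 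The only slightly delicate point is the $\limsup$ side, where one must use \emph{arbitrarily small} powers to certify that the limsup equals exactly $0$ rather than merely being bounded below by some negative number; this is precisely why $\alpha_m(t)=t^{1/m}$, and not $t^m$, is the right family on that side, while the automatic bound $\limsup\le0$ closes the converse direction.

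Combining the two displays yields $G_N=\bigcap_{m\ge1}\mathcal{G}_N(\alpha_m,\beta_m)$, where each pair $(\alpha_m,\beta_m)$ satisfies the hypotheses of Theorem~\ref{maintheoremnormal}: indeed $\alpha_m(t)\to\infty$, and $\beta_m(t)e^{-t\epsilon}=t^m e^{-t\epsilon}\to0$ for every $\epsilon>0$. Hence each $\mathcal{G}_N(\alpha_m,\beta_m)$ is a dense $G_\delta$ set in $\mathcal{H}$. Since $\mathcal{H}$ is complete, the Baire Category Theorem guarantees that the countable intersection $\bigcap_{m\ge1}\mathcal{G}_N(\alpha_m,\beta_m)$ is again a dense $G_\delta$ set, which finishes the proof.

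I expect the only genuine work to be the bookkeeping in the two exponent equivalences above; everything else is a formal assembly of Proposition~\ref{decpolproposition}, Theorem~\ref{maintheoremnormal}, and Baire's theorem. The main conceptual obstacle is simply recognizing that the conditions ``$d_{\mu_x^{N_R}}^-(0)=0$'' and ``$d_{\mu_x^{N_R}}^+(0)=\infty$'' each decompose along a single countable family of weights ($t^{1/m}$ on one side, $t^m$ on the other), all of which lie inside the admissible class permitted by Theorem~\ref{maintheoremnormal}.
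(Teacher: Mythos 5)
Your proof is correct and takes essentially the same route as the paper's own proof, which likewise sets $\alpha_n(t)=t^{1/n}$, $\beta_n(t)=t^{n}$, writes $G_N=\bigcap_{n\geq 1}\mathcal{G}_N(\alpha_n,\beta_n)$ via Proposition~\ref{decpolproposition}, and concludes by Theorem~\ref{maintheoremnormal} and Baire's theorem. Your exponent bookkeeping (including the observation that contractivity forces the limsup to be at most $0$) simply fills in details the paper leaves implicit.
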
 

\begin{proof} For every $n \geq 1$ and every  $t>0$, consider $\alpha_n(t):=t^{1/n}$ and $\beta_n(t):=t^{n}$. Then, by Proposition~\ref{decpolproposition},
\[G_N = \bigcap_{n \geq 1} {\mathcal{G}}_N(\alpha_n,\beta_n).\]
It follows from Theorem \ref{maintheoremnormal} that $G_N$ is a dense $G_\delta$ set in~$\mathcal{H}$.
\end{proof}

\begin{remark}\label{remarktheonorm} In proof of Theorem \ref{maintheoremrefineddecay} we exhibit another proof that  ${\mathcal{G}}_N(\alpha)$ is a dense set in ${\mathcal{H}}$. The above proof  explicitly indicates the relation between the decaying rates of the semigroup $(e^{tN})_{t\geq 0}$ and the local scale properties of the corresponding spectral measures of $N_R$ (Lemma \ref{lemmanormal}).
\end{remark}


\subsection{Proof of Theorem~\ref{theoremnormalclas}}

\begin{proof}[{Proof} {\rm (Theorem~\ref{theoremnormalclas})}] Again, by (\ref{real}), we assume without loss of generality that $N$ is a self-adjoint operator such that $N \leq 0$.

\textbf{{$i$})}. This is a direct consequence of Gearhart-Pr\"{u}ss Theorem.

\textbf{$ii$)}. This is a consequence of Theorem~\ref{maintheoremnormal}, Gearhart-Pr\"{u}ss Theorem and  the fact that $(e^{tN})_{t \geq 0}$  is stable if, and only if, $0$ is not an eigenvalue of $N$.

Suppose that for every $\alpha$ and $\beta$ defined as in the statement of Theorem~\ref{maintheoremnormal}, ${\mathcal{G}}_N(\alpha,\beta)$ is a dense $G_\delta$ set in~$\mathcal{H}$. Let $\alpha(t) = \beta(t) := t$, for all $t >0$. If $x \in \mathcal{H} $ and $(x_n)_n \subset {\mathcal{G}}_N(\alpha,\beta)$ are such that $\displaystyle\lim_{n \to \infty} x_n = x$, then, by Moore-Osgood Theorem, 
\[ \lim_{t \to \infty} \Vert e^{tN}x \Vert_{\mathcal{H}} =  \lim_{n \to \infty} \lim_{t \to \infty}  \Vert e^{tN}x_n \Vert_{\mathcal{H}} =  \lim_{n \to \infty}  \liminf_{t \to \infty} \Vert e^{tN}x_n \Vert_{\mathcal{H}} = 0.\]
This shows that $(e^{tN})_{t \geq 0}$  is stable and, therefore, that $0$ is not an eigenvalue of $N$. We note that if $x \in {\mathcal{G}}_N(\alpha,\beta)$, then 
\[\limsup_{t \to \infty} t \Vert e^{tN}x \Vert_{\mathcal{H}} = \infty.\]
Hence, $(e^{tN})_{t \geq 0}$ is not exponentially stable and, since $\sigma(N) \subset {\mathbb{R}}_-$, it follows from Gearhart-Pr\"{u}ss Theorem that $0 \in \sigma(N)$. 
  
The reciprocal is a direct consequence of Theorem~\ref{maintheoremnormal}.
  
\textbf{$iii$)}. If there exists an $x \in \mathcal{H}$ such that 
\begin{equation*}
\lim_{t\to \infty}\Vert e^{tN}x \Vert_{\mathcal{H}}^2=\mu_x^N(\{0\})>0,    
\end{equation*}
then it follows that $0$ is an eigenvalue of $N$. Therefore, it remains to prove that
\[\mathcal{F}_N:=\{x \in \mathcal{H}\mid \lim_{t\to\infty}\Vert e^{tN}x\Vert_{\mathcal{H}}>0\}\] 
is a dense open set in~$\mathcal{H}$ if $0$ is an eigenvalue of $N$. Since the mapping 
\[ {\mathcal{H}} \ni x\; \longmapsto \; \Vert E^{N}(\{0\})x \Vert_{\mathcal{H}}^2 \]
is continuous, it follows that 
\begin{eqnarray}
  \nonumber \mathcal{F}_N = \{x \mid  \mu_x^N(\{0\}) > 0 \}=\{x \mid  \Vert  E^{N}(\{0\})x \Vert_{\mathcal{H}}^2 > 0 \}
\end{eqnarray}
is a open set in ${\mathcal{H}}$ (we note also that $\mathcal{F}_N=\mathcal{H}\setminus{\rm kernel}(N)^\perp$).

Now, given $x \in \mathcal{H}$, write $x = x_1 + x_2$, with $x_1 \in {\mathrm {Span}}\{x_0\}^\perp$ and $x_2 \in {\mathrm {Span}}\{x_0\}$, where $x_0$, with $\Vert x_0 \Vert_{\mathcal{H}} =1$, is an eigenvector of $N$ associated with the eigenvalue $0$. If $x_2 \not =  0$, then 
\begin{eqnarray*}
\nonumber \Vert e^{tN} x \Vert_{\mathcal{H}}^2 &=&   \mu_{x}^N(\{0\})+\int_{{\mathbb{R}}_-\setminus\{0\}}e^{2ty}d\mu_{x}^N(y)  \geq  \mu_{x}^N (\{0\}) = \Vert E^N(\{0\})x \Vert_{\mathcal{H}}^2\\  \nonumber &\geq& 2\,R\big\langle E^N(\{0\})x_1,E^N(\{0\})x_2 \big\rangle + \Vert E^N(\{0\})x_2 \Vert_{\mathcal{H}}^2\\ &=&  \Vert x_2 \Vert_{\mathcal{H}}^2, 
\end{eqnarray*}
from which follows that 
\[\lim_{t \to \infty}  \Vert e^{tN} x \Vert_{\mathcal{H}} > 0.\]
Now, if $x_2 =  0$, define, for each $k\geq1$,
\[x_k  := x + \frac{x_0}{k}.\]
It is clear that $x_k \rightarrow x$. Moreover, by the previous arguments, one has, for each $k\geq1$,
\[\lim_{t \to \infty} \Vert e^{tN} x_k \Vert_{\mathcal{H}} > 0.\]
This proves that $\mathcal{F}_N$ is a dense open set in~$\mathcal{H}$.
\end{proof}


\subsection{Proof of Theorem \ref{maintheoremrefineddecay}}

\noindent The proof of Theorem \ref{maintheoremrefineddecay} relies, again, on Theorem \ref{theoMuller1}.

\begin{proof}[{Proof} {\rm (Theorem \ref{maintheoremrefineddecay})}] The proof that each one of the sets 
\[{\mathcal{G}}_A(\alpha):= \{x  \mid  \limsup_{t \to \infty} \alpha(t)  \Vert T(t)x\Vert_{\mathcal{H}} = \infty\}\]
and
\[{\mathcal{G}}_A(\beta):= \{x  \mid  \liminf_{t \to \infty} \beta(t)  \Vert T(t)x\Vert_{\mathcal{H}} = 0\}\]
is a $G_\delta$ set in $\mathcal{H}$ follows the same reasoning presented in the proof of Theorem~\ref{maintheoremnormal}.

Since, by hypothesis~(\ref{H2}), there exists $C>0$ such that, for every $x \in {\mathcal{D}}(A^k)$ and every sufficiently large~$t$, 
\[\Vert T(t)x\Vert_{\mathcal{H}} \leq C r(t) \Vert A^kx \Vert_{\mathcal{H}},\]
it follows that, for every $x \in {\mathcal{D}}(A^k)$,  
\[\liminf_{t \to \infty} \beta(t)  \Vert T(t)x\Vert_{\mathcal{H}} = 0.\]
Thus, ${\mathcal{G}}_A(\beta) \supset {\mathcal{D}}(A^k)$ is a dense set in ${\mathcal{H}}$.

It remains to prove that ${\mathcal{G}}_A(\alpha)$ is dense in~$\mathcal{H}$. We note that, by Theorem~\ref{theoMuller1}, there exists $x_0 \in \mathcal{H}$ such that $\Vert x_0 \Vert_{\mathcal{H}} \leq 1$ and, for each $t\geq 0$, 
\begin{equation}\label{maintheorem1}
\Vert T(t)x_0 \Vert_{\mathcal{H}} > \frac{1}{\sqrt{\alpha(t)+2}}.  
\end{equation}

So, given $x \in \mathcal{H}$, suppose now that, for each $k\geq 1$, there exists a sequence $t_j \rightarrow \infty$ so that, for each $j$,
\begin{equation}\label{maintheorem2}
\Vert T(t_j)x \Vert_{\mathcal{H}} < \frac{\Vert T(t_j)x_0 \Vert_{\mathcal{H}}}{4k};    
\end{equation}
otherwise, it follows from (\ref{maintheorem1}) that $x \in {\mathcal{G}}_A(\alpha)$. Set, for each $k\geq 1$, 
\[x_k := x + \frac{x_0}{k}.\]
It is clear that $x_k \rightarrow x$ in~$\mathcal{H}$. Moreover, by (\ref{maintheorem2}), for each $k \geq 1$ and each $j$,
\begin{eqnarray}\label{maintheorem3}
\nonumber \Vert T(t_j)x_k\Vert_{\mathcal{H}}^2 &\geq& - \frac{2\Vert T(t_j)x \Vert_{\mathcal{H}} \Vert T(t_j)x_0 \Vert_{\mathcal{H}}}{k} + \frac{\Vert T(t_j)x_0 \Vert_{\mathcal{H}}^2}{k^2}\\ &\geq& \frac{\Vert T(t_j)x_0 \Vert_{\mathcal{H}}^2}{2k^2}.   
\end{eqnarray}
Thus, combining (\ref{maintheorem1}) and (\ref{maintheorem3}) it follows that,
\[\displaystyle\limsup_{t \to \infty} \alpha(t) \Vert T(t)x_k\Vert_{\mathcal{H}} = \infty.\]
 for each $k \geq 1$. 
\end{proof}


\begin{center} \Large{Acknowledgments} 
\end{center}

M.A.\ was supported by CAPES (a Brazilian government agency). S.L.C.\ was partially supported by FAPEMIG (a Brazilian government agency; Universal Project 001/17/CEX-APQ-00352-17). C.R.dO.\ thanks the partial support by CNPq (a Brazilian government agency, under contract 303503/2018-1). The authors are grateful to Pedro T.\  P.\ Lopes for fruitful discussions and helpful remarks.  We thank the anonymous referee for valuable suggestions that have substantially improved the exposition of the manuscript.


\noindent  Email: moacir@ufam.edu.br, Departamento de Matem\'atica, ~UFAM, Manaus, AM, 369067-005 Brazil

\noindent  Email: silas@mat.ufmg.br, Departamento de Matem\'atica, UFMG, Belo Horizonte, MG, 30161-970 Brazil

\noindent  Email: oliveira@dm.ufscar.br,  Departamento  de  Matem\'atica,   UFSCar, S\~a Carlos, SP, 13560-970 Brazil


\begin{thebibliography}{30}

\bibitem{Anantharaman} N. Anantharaman and M. L\'{e}autaud, {\it Sharp polynomial decay rates for the damped wave equation on the torus},  With an appendix by S. Nonnenmacher. Anal. PDE. {\bf 7} (2014), 159--214.

\bibitem{Arendt} W. Arendt and C. J. K. Batty, {\it Tauberian theorems and stability of one-parameter semigroups}, Trans. Amer. Math. Soc. {\bf 306} (1988), 837--852.

\bibitem{Batty} C. J. K. Batty, R. Chill, and Y. Tomilov, {\it Fine scales of decay of operator semigroups}, J. Eur. Math. Soc.  {\bf 18} (2016), 853--929.
 
\bibitem{Batty1} C. J. K. Batty and T. Duyckaerts, {\it Non-uniform stability for bounded semi-groups on Banach spaces}, J. Evol. Eq. {\bf 8} (2008), 765--780.

\bibitem{Berezannskii} Yu. M. Berezannskii, {\it The projection spectral theorem}, Russ. Math. Surveys. {\bf 39} (1984), 1--162. 

\bibitem{Borichev} A. Borichev and Y. Tomilov, {\it Optimal polynomial decay of functions and operator semigroups}, Math. Ann. {\bf 347} (2010), 455--478.

\bibitem{Burq} N. Burq, {\it D\'ecroissance de l\'energie locale de l\'equation des ondes pour le probl\'eme ext\'erieur et absence de r\'esonance au voisinage du r\'eel}, Acta Math. {\bf 180} (1998), 1--29.

\bibitem{Conti}  M. Conti, V. Danese, C. Giorgi, and V. Pata, {\it A model of viscoelasticity with time-dependent memory kernels}, Amer. J. Math. {\bf 140} (2018), 349--389.

\bibitem{Oliveira} C. R. de Oliveira, Intermediate spectral theory and quantum dynamics, Birkh\"auser, Basel, 2009. 

\bibitem{Herbst} I.W. Herbst, {\it The spectrum of Hilbert space semigroups}, J. Operator Th. {\bf 10} (1983), 87--94.

\bibitem{Howland} J. S. Howland, {\it On a theorem of Gearhart}, Integral Eq. Operator Th. {\bf 7} (1984), 138--142.

\bibitem{Korevaar} J. Korevaar, {\it On Newman\'s quick way to the prime number theorem}, Math. Intelligencer, {\bf 4} (1982), 108--115.

\bibitem{Lyubich} Yu. I. Lyubich and V\~u Q. Ph\'ong, {\it Asymptotic stability of linear differential equations on Banach spaces}, Studia Math. {\bf 88} (1988), 37--42.

\bibitem{Muller} V. M\"{u}ller and Y. Tomilov, {\it ``Large'' weak orbits of $C_0$-semigroups}, Acta Sci. Math. (Szeged). {\bf 79}  (2013), 475--505.

\bibitem{Pruss} J. Pr\"{u}ss, {\it On the spectrum of $C_0$-semigroups}, Trans. Amer. Math. Soc. {\bf 284} (1984), 847--857. 

\bibitem{Rozendaal} J. Rozendaal, D. Seifert, and R. Stahn, {\it Optimal rates of decay for operator semigroups on Hilbert spaces}, Advances in Mathematics {\bf 346} (2019), 359--388.

\bibitem{Rudin} W. Rudin, Functional analysis. Second edition. McGraw-Hill, New York, 1991.

\bibitem{Samoilenko} Y. S. Samoilenko, Spectral theory of families of self-Adjoint operators,  Kluwer, Dordrecht, 1991.

\bibitem{Stahn} R. Stahn, {\it Optimal decay rate for the wave equation on a square with constant damping on a strip}, Z. Angew. Math. Phys. {\bf 68}(2), 36 (2017).    

\bibitem{van}  J. M. A. M. van Neerven, The asymptotic behavior of semigroups of linear operators, Operator Theory: Advances and Applications, Birkh\"auser, Basel, 1996.

\end{thebibliography}
\end{document}